\newtheorem{thm}{Theorem}[section]
\newtheorem{prop}[thm]{Proposition}
\newtheorem{lem}[thm]{Lemma}
\newtheorem{theorem}[thm]{Theorem}
\theoremstyle{definition}
\newtheorem{defi}[thm]{Definiton}
\newtheorem{example}[thm]{Example}
\newtheorem{definition}[thm]{Definition}
\newtheorem*{ack}{Acknowledgements}
\title{Supramenable groups and partial actions}
\date{}
\author{Eduardo P. Scarparo\thanks{This work was supported by CNPq, National Council for Scientific and Technological Development - Brazil.}}
\renewcommand\footnotemark{}
\begin{document}
\maketitle

\begin{abstract}
We characterize supramenable groups in terms of existence of invariant probability measures for partial actions on compact Hausdorff spaces and existence of tracial states on partial crossed products.

These characterizations show that, in general, one cannot decompose a partial crossed product of a $\mathrm{C}^*$-algebra by a semi-direct product of groups as two iterated partial crossed products. We give conditions which ensure that such decomposition is possible.
\end{abstract}
\section{Introduction} 
Partial actions of groups on $\mathrm{C}^*$-algebras were introduced by Exel (\cite{exelcircle})  and McClanahan (\cite{mcclanahan}) as a means of computing invariants and describing the structure of $\mathrm{C}^*$-algebras. It has since been also studied in other categories, notably of sets, topological spaces and algebras (see \cite{exel} for details). 

A group $G$ is called \emph{amenable} if it carries an invariant, finitely additive measure $\mu$ such that $\mu(G)=1$. Following Rosenblatt (\cite{rosenblatt1974invariant}), a group $G$ is called \emph{supramenable} if for every non-empty $A\subset G$ there is an invariant, finitely additive measure $\mu$ on $G$ such that $\mu(A)=1$.

The class of supramenable groups is closed under taking subgroups, quotients and direct limits. Abelian groups and, more generally, groups of subexponential growth are supramenable (see \cite[Chapter 12]{s1993banach} for a proof of these facts). It is not known if the direct product of supramenable groups is supramenable and if every supramenable group has subexponential growth.

It is a well-known fact that a group is amenable if and only if whenever it acts on a compact Hausdorff space, then the space admits an invariant probability measure. There is also a non-commutative version of this result which says that a group is amenable if and only if whenever it acts on a unital $\mathrm{C}^*$-algebra which has a tracial state, then the associated crossed product also has a tracial state.

These results do not hold for partial actions, though, as Hopenwasser showed in \cite{hop} that the Cuntz algebras can be realized as partial crossed products associated to partial actions of certain amenable groups on the Cantor set.

In \cite{kmr}, Kellerhals, Monod and Rørdam showed that a group is supramenable if and only if whenever it acts co-compactly on a locally compact Hausdorff space, then the space admits an invariant, non-zero and regular measure. 

The main purpose of this paper is to show that the role of amenable groups is played by supramenable groups in the  context of partial actions. 

In Section \ref{comm}, we prove that a group is supramenable if and only if whenever it partially acts on a compact Hausdorff space, then the space admits an invariant probability measure (Proposition \ref{cph}).

In Section \ref{tracial}, we prove the non-commutative version of Proposition \ref{cph}. Namely, we show that a group is supramenable if and only if whenever it partially acts on a unital $\mathrm{C}^*$-algebra which has a tracial state, then the associated partial crossed product also has a tracial state (Theorem \ref{imp}). 

In \cite{gs}, Giordano and Sierakowski gave conditions which ensure that a partial crossed product is purely infinite. Our results, one the other hand, show that one cannot get a purely infinite $\mathrm{C}^*$-algebra out of a partial action of a supramenable group on a unital $\mathrm{C}^*$-algebra which has a tracial state.

Recall that a \emph{Kirchberg algebra} is a simple, nuclear, separable and purely infinite $\mathrm{C}^*$-algebra. Non-amenability of a group was used by Rørdam and Sierakowski in \cite{rs} for constructing unital Kirchberg algebras out of actions on the Cantor set. Analogously, non-supramenability of a group was used by Kellerhals, Monod and Rørdam in \cite{kmr} for constructing stable Kirchberg algebras out of actions on the locally compact, non-compact Cantor set. 

By using a result of \cite{kmr}, we show that if $G$ is a countable, amenable, non-supramenable group, then there exists a free, minimal, purely infinite and non-global partial action of $G$ on the Cantor set $K$  (Proposition \ref{kir}). It is a consequence of results from \cite{exel} and \cite{gs},  that the partial crossed product associated to any such partial action of $G$ on $K$ is a Kirchberg algebra.

Given a (global) action of a semi-direct product $G\rtimes H$ on a $\mathrm{C}^*$-algebra $A$, one can decompose the crossed product $A\rtimes(G\rtimes H)$ as two iterated crossed products by $G$ and $H$:
\begin{equation}
A\rtimes (G\rtimes H)\cong( A\rtimes G)\rtimes H.\label{isosd}
\end{equation}

 The class of supramenable groups is not closed under taking semi-direct products. For example, the \emph{lamplighter group} $\frac{\mathbb{Z}}{2\mathbb{Z}}\wr\mathbb{Z}$ contains a free monoid on two generators, hence cannot be supramenable. Therefore, our results show that, in general, one cannot have a decomposition such as in \eqref{isosd} for partial actions. 

In Section \ref{dec}, we give conditions under which \eqref{isosd} holds for partial actions. These conditions involve assuming that $A$ is stable or requiring that the domains of the partial isomorphisms satisfy a certain condition.
\begin{ack}
The author is grateful to his advisor, Mikael Rørdam, for his guidance throughout this work. The author would also like to thank Matias Andersen for very useful conversations on the topics of this work.
\end{ack}
\section{Supramenability and partial actions on compact Hausdorff spaces}\label{comm}
Throughout this article, we denote the identity element of a group by $e$ and all groups are assumed to be discrete.

We begin by recalling the definition of partial actions on topological spaces. See \cite{exel} for details and historical notes.

\begin{defi}
A \emph{partial action} $\theta$ of a group $G$ on a topological space $X$ is a pair $(\{D_g\}_{g\in G},\{\theta_g\}_{g\in G})$ where $\{D_g\}_{g\in G}$ is a family of open subsets of $X$ and each $\theta_g$ is a homeomorphism from $D_{g^{-1}}$ onto $D_g$ such that:
\begin{description}
\item{(i)} $\theta_e=\mathrm{Id}_X$;

\item{(ii)} For every $g,h\in G$ and $x\in D_{g^{-1}}$, if $\theta_g(x)\in D_{h^{-1}}$, then $x\in D_{(hg)^{-1}}$ and $\theta_{hg}(x)=\theta_h\circ\theta_g(x)$.
\end{description}

A consequence of the above definition is that, for every $g,h\in G$, 

\begin{equation}
\theta_g(D_{g^{-1}}\cap D_h)=D_g\cap D_{gh}.\label{doms}
\end{equation}

If $D_g=X$ for every $g\in G$, then the partial action is just a usual action of a group on a topological space. In this case, we might say that the partial action is \emph{global}.

In the definition of a partial action, it is possible that some of the open subsets $D_g$ are empty. For example, any group $G$ partially acts trivially on a topological space $X$ by letting $D_g:=\emptyset$ for every $g\neq e$ and $\theta_e:=\mathrm{Id}_X$.

\end{defi}
\begin{example}\label{restric}
Let $\theta$ be an action of a group $G$ on a topological space $X$. Given an open subset $D$ of $X$, let, for every $g\in G$, $D_g:=D\cap\theta_g(D)$. Then $$(\{D_g\}_{g\in G},\{\theta_g|_{D_{g^{-1}}}\}_{g\in G})$$ is a partial action of $G$ on $D$, called the \emph{restriction} of $\theta$ to $D$.
\end{example}

\begin{defi}
Let $(\{D_g\}_{g\in G},\{\theta_g\}_{g\in G})$ be a partial action of a group $G$ on a topological space $X$. We say a measure $\nu$ on $X$ is \emph{invariant} if, for all $E\in\mathcal{B}(X)$ and $g\in G$, we have that
$$\nu(\theta_g(E\cap D_{g^{-1}}))=\nu(E\cap D_{g^{-1}}).$$
\end{defi}

Next, we recall the definitions of supramenable groups and paradoxical subsets of a group. See \cite{s1993banach} for historical notes.

\begin{defi}
A group $G$ is \emph{supramenable} if for every non-empty subset $A$ of $G$ there is an invariant, finitely additive measure $\mu:\mathcal{P}(G)\to[0,+\infty]$ such that $\mu(A)=1$. 
\end{defi}

\begin{defi}\label{para}
Let $G$ be a group. We say a non-empty subset $A$ of $G$ is \emph{paradoxical} if there exist disjoint subsets $B$ and $C$ of $A$, finite partitions $\{B_i\}_{i=1}^n$ and $\{C_j\}_{j=1}^m$ of $B$ and $C$ and elements $s_1,...,s_n,t_1,...,t_m\in G$ such that $A=\sqcup_{i=1}^n s_iB_i=\sqcup_{j=1}^m t_jC_j$ (disjoint union).

\end{defi}
By Tarski's theorem, a non-empty subset $A$ of a group $G$ is not paradoxical if and only if there exists an invariant, finitely additive measure $\mu:\mathcal{P}(G)\to[0,+\infty]$ such that $\mu(A)=1$. Therefore, a group is supramenable if and only if it contains no paradoxical subsets.

In order to prove Proposition \ref{cph}, we will need the following lemma, whose proof can be found in  \cite[Proposition 2.1]{kmr}. 
\begin{lem}\label{func}
Let $G$ be a group and let $\mu$ be a finitely additive measure on $G$. Let $V_\mu$ be the the subspace of $\ell^\infty(G)$ consisting of all $f\in\ell^\infty(G)$ such that $\mu(\mathrm{supp} (f))<\infty$. It follows that there is a unique positive linear functional $I_\mu\colon V_\mu\to\mathbb{C}$ such that $I_\mu(1_E)=\mu(E)$ for all $E\subset G$ with $\mu(E)<\infty$. If $\mu$ is $G$-invariant, then so is $I_\mu$.
\end{lem}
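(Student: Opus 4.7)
The plan is to mimic the construction of the Lebesgue integral, the essential point being that on $V_\mu$ one has uniform approximation by simple functions even though $\mu$ is only finitely additive. First I would note that $V_\mu$ is indeed a linear subspace of $\ell^\infty(G)$, since $\mathrm{supp}(f+g)\subset\mathrm{supp}(f)\cup\mathrm{supp}(g)$ and $\mu$ is subadditive. Then I would define $I_\mu$ on the subspace $S_\mu\subset V_\mu$ of simple functions --- finite linear combinations $s=\sum_{i=1}^n\alpha_i 1_{E_i}$ with $\mu(E_i)<\infty$ --- by $I_\mu(s):=\sum_i\alpha_i\mu(E_i)$. The technical step here is well-definedness: given two representations of $s$, pass to the common refinement coming from the finite Boolean algebra generated by the $E_i$'s and the $F_j$'s and invoke finite additivity of $\mu$. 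Linearity and positivity of $I_\mu$ on $S_\mu$ are then immediate, and when $\mu$ is $G$-invariant, invariance under $(g\cdot f)(x):=f(g^{-1}x)$ is too, via $g\cdot 1_E=1_{gE}$ and $\mu(gE)=\mu(E)$.

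Next I would extend $I_\mu$ from $S_\mu$ to $V_\mu$ by uniform approximation. For $f\in V_\mu$ with $E:=\mathrm{supp}(f)$, in the real-valued case I would partition $[-\|f\|_\infty,\|f\|_\infty]$ into intervals $J_k^n$ of length $1/n$, pick $\alpha_k^n\in J_k^n$, and set $s_n:=\sum_k\alpha_k^n\,1_{f^{-1}(J_k^n)\cap E}\in S_\mu$, so that $\|s_n-f\|_\infty\leq 1/n$; for complex $f$ I would apply this to real and imaginary parts. Writing $s_n-s_m$ as a linear combination of indicators of pairwise disjoint subsets of $E$ gives $|I_\mu(s_n)-I_\mu(s_m)|\leq\|s_n-s_m\|_\infty\,\mu(E)$, so $(I_\mu(s_n))$ is Cauchy, and interleaving two approximating sequences shows that $I_\mu(f):=\lim_n I_\mu(s_n)$ is independent of the choice. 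Linearity, positivity, and $G$-invariance then pass to the limit.

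For uniqueness I would observe that any positive linear functional $J$ on $V_\mu$ with $J(1_E)=\mu(E)$ automatically satisfies the continuity estimate $|J(h)|\leq 2\|h\|_\infty\,\mu(\mathrm{supp}(h))$ on $V_\mu$ (sandwich the real and imaginary parts of $h$ between $\pm\|h\|_\infty\cdot 1_{\mathrm{supp}(h)}$ and apply positivity); since $J=I_\mu$ on $S_\mu$ by definition, this forces $J(f)=\lim J(s_n)=\lim I_\mu(s_n)=I_\mu(f)$ for every $f\in V_\mu$. The principal obstacle in this argument is the absence of monotone or dominated convergence theorems for finitely additive measures, and this is exactly what motivates the restriction to $V_\mu$ and the use of uniform rather than $L^1$-type approximation; everything else is a routine replaying of the classical construction.
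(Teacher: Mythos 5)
Your argument is correct and is essentially the standard one: the paper does not prove this lemma itself but defers to \cite[Proposition 2.1]{kmr}, whose proof likewise defines $I_\mu$ on simple functions supported in finite-measure sets and extends by uniform approximation using the bound $|I_\mu(s)|\leq\|s\|_\infty\,\mu(\mathrm{supp}(s))$. All the steps you flag as delicate (well-definedness via common refinement, the Cauchy estimate from disjointness and finite additivity, and uniqueness from the positivity sandwich) go through as you describe.
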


\begin{prop}\label{cph}
A group is supramenable if and only if whenever it partially acts on a compact Hausdorff space, then the space admits an invariant probability measure. 

\end{prop}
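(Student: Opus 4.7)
For the reverse implication $(\Leftarrow)$, the plan is to begin with a non-empty $A \subset G$ and use the natural global action of $G$ on $\beta G$ extending left translation. Because $G$ is discrete, the map $B \mapsto \overline{B}$ from $\mathcal{P}(G)$ to the clopen subsets of $\beta G$ is a $G$-equivariant Boolean algebra isomorphism, so $\overline{A}$ is clopen and Example \ref{restric} gives a partial action of $G$ on the compact Hausdorff space $\overline{A}$; the hypothesis then supplies an invariant probability measure $\nu$. By Tarski's theorem it then suffices to show that $A$ is not paradoxical. Given a hypothetical decomposition $A = \bigsqcup_i s_i B_i = \bigsqcup_j t_j C_j$ with disjoint $B = \bigsqcup_i B_i$ and $C = \bigsqcup_j C_j$ both contained in $A$, the inclusion $s_i B_i \subset A$ rephrases as $\overline{B_i} \subset s_i^{-1}\overline{A}$, so partial-action invariance applied with $E = \overline{B_i}$ yields $\nu(\overline{s_i B_i}) = \nu(\overline{B_i})$. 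Summing and using the Boolean isomorphism gives $\nu(\overline{A}) = \nu(\overline{B})$ and, symmetrically, $\nu(\overline{A}) = \nu(\overline{C})$, while $\nu(\overline{B}) + \nu(\overline{C}) \leq \nu(\overline{A}) = 1$ forces the contradiction $2 \leq 1$.

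For the forward direction $(\Rightarrow)$, let $\theta$ be a partial action of a supramenable group $G$ on a compact Hausdorff space $X$. I would fix any $x_0 \in X$ and set $A := \{g \in G : x_0 \in D_{g^{-1}}\}$; since $D_e = X$, one has $e \in A$. Supramenability yields an invariant finitely additive measure $\mu$ on $G$ with $\mu(A) = 1$, and Lemma \ref{func} promotes this to a $G$-invariant positive linear functional $I_\mu$ on $V_\mu \subset \ell^\infty(G)$. For each $f \in C(X)$ the function $F_f(g) := f(\theta_g(x_0)) 1_A(g)$ is supported in $A$, hence in $V_\mu$, so $\Lambda(f) := I_\mu(F_f)$ defines a positive linear functional on $C(X)$ with $\Lambda(1) = \mu(A) = 1$; by Riesz representation, $\Lambda$ corresponds to a Borel probability measure $\nu$ on $X$.

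The main obstacle will be verifying that $\nu$ is $\theta$-invariant. The plan is to show $\Lambda(f) = \Lambda(f \circ \theta_h)$ for every $h \in G$ and every $f \in C_0(D_h)$, with $f$ and $f \circ \theta_h \in C_0(D_{h^{-1}})$ both viewed inside $C(X)$ by zero extension. Repeated use of the partial-action axioms and of (\ref{doms}) shows that for $g \in A$ one has $\theta_g(x_0) \in D_h$ exactly when $g \in A \cap hA$, and $\theta_g(x_0) \in D_{h^{-1}}$ exactly when $g \in A \cap h^{-1}A$, in which case $\theta_h(\theta_g(x_0)) = \theta_{hg}(x_0)$. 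A direct computation then identifies $F_{f \circ \theta_h}$ as the left translate of $F_f$ by $h^{-1}$, so the $G$-invariance of $I_\mu$ yields $\Lambda(f \circ \theta_h) = \Lambda(f)$. A standard regularity argument promotes this to the measure-theoretic invariance $\nu(\theta_h(E \cap D_{h^{-1}})) = \nu(E \cap D_{h^{-1}})$ for all Borel $E \subset X$. I expect the delicate points to be justifying continuity of the zero-extension of $f \circ \theta_h$ and keeping track of the interplay among the sets $A$, $hA$, $D_h$, and $D_{h^{-1}}$.
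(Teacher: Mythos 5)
Your proof is correct and takes essentially the same route as the paper: the forward direction (supramenable $\Rightarrow$ invariant measure) is the paper's argument — the functional $I_\mu(\hat f)$ built from Lemma \ref{func} on the set $A=\{g: x_0\in D_{g^{-1}}\}$, the equivariance identity $g.\hat f=\widehat{f\circ\theta_{g^{-1}}}$, then Riesz representation and regularity. For the converse you use the same construction (restriction of the canonical $\beta G$-action to the clopen set $\overline{j(A)}$); the only difference is that you argue it directly, carrying out inline the computation that an invariant probability measure kills a paradoxical decomposition, where the paper instead cites \cite[Lemma 2.4]{kmr} — the content is the same.
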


\begin{proof}

Assume first that $G$ is a non-supramenable group. Then it has a paradoxical subset $A$. Let $j\colon G\to\beta G$ be the embedding of $G$ into its Stone-{\v{C}}ech compactification. Consider the partial action obtained by restricting the canonical action of $G$ on $\beta G$ to $\overline{j(A)}$. By using \cite[Lemma 2.4]{kmr}, one concludes that this partial action does not admit an invariant probability measure.

Conversely, let $(\{D_g\}_{g\in G},\{\theta_g\}_{g\in G})$ be a partial action of a supramenable group $G$ on a compact Hausdorff space $X$. Fix $x_0\in X$. Given $f\in C(X)$, let $\hat{f}\in \ell^\infty(G)$ be defined by 
$$\hat{f}(g):=\begin{cases}0 &\text{if } x_0\notin D_{g^{-1}} \\
f(\theta_g(x_0))  & \text{if } x_0\in D_{g^{-1}}. \end{cases}$$

By using \eqref{doms}, one can easily check that, for every $g\in G$ and $f\in C_0(D_{g^{-1}})$ (seen as an ideal of $C(X)$), it holds that 

\begin{equation}
g.\hat{f}=\widehat{f\circ\theta_{g^{-1}}}.\label{nvs}
\end{equation}

 Notice that, formally speaking, $f\circ\theta_{g^{-1}}$ is only defined on $D_g$. We see it as being defined on $X$ by extending it as $0$ outside of $D_g$.

Let $A:=\{g\in G:x_0\in D_{g^{-1}}\}$. Since $G$ is supramenable, there exists an invariant, finitely additive measure $\mu$ on $G$ such that $\mu(A)=1$.

In the notation of Lemma \ref{func}, let

\begin{align*}
\varphi\colon C(X)&\to\mathbb{C}\\
f&\mapsto I_\mu(\hat{f}).
\end{align*}

Notice that $\varphi$ is well-defined because, for every $f\in C(X)$, we have that $\mathrm{supp}(\hat{f})\subset A$.

Since $\widehat{1_{C(X)}}=1_A$, we obtain that $\varphi(1_{C(X)})=1$. Therefore, $\varphi$ is a state, since it is clearly positive. By \eqref{nvs} and the $G$-invariance of $I_\mu$, we conclude that, for every $g\in G$ and $f\in C_0(D_{g^{-1}})$, it holds that 
\begin{equation}
\varphi(f\circ\theta_{g^{-1}})=\varphi(f).\label{nvs2}
\end{equation}

Let $\nu$ be the regular probability measure on $X$ associated to $\varphi$ through the Riesz Representation theorem.  By \eqref{nvs2} and the inner regularity of $\nu$, we conclude that $\nu$ is invariant on open subsets. By outer regularity, it follows that $\nu$ is invariant for all Borel measurable sets.

\end{proof}

Next, we recall the definition and some facts about partial actions on $\mathrm{C}^*$-algebras and partial crossed products. See \cite{exel} for the details.
 
\begin{defi}\label{partiald}
A \emph{partial action} $\theta$ of a group $G$ on a $\mathrm{C}^*$-algebra $A$ is a pair $(\{I_g\}_{g\in G},\{\theta_g\}_{g\in G})$ where $\{I_g\}_{g\in G}$ is a family of closed two-sided ideals of $A$ and each $\theta_g$ is a $*$-isomorphisms from $I_{g^{-1}}$ onto $I_g$ such that:
\begin{description}
\item{(i)} $\theta_e=\mathrm{Id}_A$;

\item{(ii)} For every $g,h\in G$ and $x\in I_{g^{-1}}$, if $\theta_g(x)\in I_{h^{-1}}$, then $x\in I_{(hg)^{-1}}$ and $\theta_{hg}(x)=\theta_h\circ\theta_g(x)$.
\end{description}
The quadruple $(A, G, \{I_g\}_{g\in G},\{\theta_g\}_{g\in G})$ is called a \emph{partial dynamical system}.
\end{defi}

Given a partial dynamical system $$(A, G, \{I_g\}_{g\in G},\{\theta_g\}_{g\in G}),$$ denote by $C_c(G,A)$ the vector space of finitely supported functions from $G$ into $A$. Let 
$$A\rtimes_{\theta,\mathrm{alg}}G:=\{f\in C_c(G,A):f(g)\in I_g \text{ for every $g\in G$}\}.$$

For every $g\in G$ and $a_g\in I_g$, let $a_g\delta_g\in A\rtimes_{\theta,\mathrm{alg}}G$ be defined by
\begin{align*}
a_g\delta_g(h):=\begin{cases}a_g &\text{if } h=g \\
0 & \text{if } g\neq h \end{cases},\quad h\in G.
\end{align*}

Notice that $\{a_g\delta_g:g\in G,a_g\in I_g\}$ spans $A\rtimes_{\theta,\mathrm{alg}}G$. Hence we can define a product and an involution on $A\rtimes_{\theta,\mathrm{alg}}G$ by
\begin{align*}
(a_g\delta_g)(b_h\delta_h):=\theta_g(\theta_{g^{-1}}(a_g)b_h)\delta_{gh}\\
(a_g\delta_g)^*:=\theta_{g^{-1}}(a_g^*)\delta_{g^{-1}}
\end{align*}
for $g,h\in G$, $a_g\in I_g$ and $b_h\in I_h$. These operations turn $A\rtimes_{\theta,\mathrm{alg}}G$ into a $*$-algebra.

Define a seminorm on $A\rtimes_{\theta,\mathrm{alg}}G$ by
$$\|x\|_{\mathrm{max}}:=\sup\{p(x):\text{$p$ is a $\mathrm{C}^*$-seminorm on $A\rtimes_{\theta,\mathrm{alg}}G$}\}.$$

The \emph{partial crossed product} associated to the partial action $\theta$, denoted by $A\rtimes_\theta G$ or $A\rtimes G$, is the enveloping $\mathrm{C}^*$-algebra of the $*$-algebra $A\rtimes_{\theta,\mathrm{alg}}G$ endowed with the seminorm $\|\cdot\|_{\mathrm{max}}$.

There is an embedding

\begin{align*}
i\colon A&\to A\rtimes_\theta G\\
a&\mapsto a\delta_e
\end{align*}

and a conditional expectation $\psi\colon A\rtimes_\theta G\to A$ such that 
\begin{align}\label{psi}
\psi(a\delta_g)=\begin{cases}a &\text{if } g=e \\
0 & \text{if } g\neq e \end{cases},\quad g\in G, a\in I_g.
\end{align}

Notice that the construction of the partial crossed product is a generalization of the usual crossed product associated to an action of a group on a $\mathrm{C}^*$-algebra.

Given a partial action $\theta=(\{D_g\}_{g\in G},\{\theta_g\}_{g\in G})$ of a group $G$ on a locally compact Hausdorff space $X$, one can associate to it a partial action on $C_0(X)$, with domains $C_0(D_g)$ (seen as ideals of $C_0(X)$) and $*$-isomorphisms given by composition with the homeomorphisms $\theta_g$. 
\begin{defi}

Let $\theta=(\{D_g\}_{g\in G},\{\theta_g\}_{g\in G})$ be a partial action of a group $G$ on a Hausdorff space $X$. We say $\theta$ is \emph{free} if, for every $g\in G$ and $x\in D_{g^{-1}}$, $$\theta_g(x)=x \implies g=e.$$

The partial action is said to be \emph{minimal} if, given a closed set $F\subset X$ such that, for every $g\in G,\theta_g(F\cap D_{g^{-1}})\subset F$, we necessarily have that $F=\emptyset$ or $F=X$.

That is, the partial action is minimal if $X$ has no non-trivial invariant closed subsets.

If $X$ is totally disconnected, then the partial action is said to be \emph{purely infinite} if, for every compact-open subset $K$ of $X$, there exist pairwise disjoint compact-open subsets $K_1,...,K_{n+m}$ and elements $t_1,...,t_{n+m}\in G$ such that $K_j\subset K\cap D_{t_j^{-1}}$ for all $j$ and
$$K=\bigcup_{j=1}^n\theta_{t_j}(K_j)=\bigcup_{j=n+1}^{n+m}\theta_{t_j}(K_j).$$
\end{defi}

This definition of purely infinite partial action generalizes the one in \cite[Definition 4.4]{kmr}. Also, if the partial action is purely infinite, then every compact-open subset of $X$ is $(G,\tau_X)$-paradoxical, in the sense of \cite[Definition 4.3]{gs}.
\begin{prop}\label{kir}
Let $G$ be an amenable, non-supramenable, countable group. Then $G$ admits a free, minimal, purely infinite and non-global partial action on the Cantor set $K$. The partial crossed product associated to any such partial action of $G$ on $K$ is a Kirchberg algebra.
\end{prop}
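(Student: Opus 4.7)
The plan is to obtain the required partial action as a restriction of a suitable global action, and then bootstrap the four properties of the partial crossed product. First, I would invoke the main result of \cite{kmr}, which produces, from a countable non-supramenable group $G$, a free, minimal, purely infinite action $\alpha$ of $G$ on the locally compact, non-compact Cantor set $K_\infty$. Choose any non-empty compact-open subset $K \subset K_\infty$ homeomorphic to the Cantor set, and let $\theta$ be the restriction of $\alpha$ to $K$ as in Example \ref{restric}, with domains $D_g = K \cap \alpha_g(K)$.

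Next I would verify the four claimed properties of $\theta$. \emph{Freeness} is immediate: if $\theta_g(x)=x$ for $x\in D_{g^{-1}}$, then also $\alpha_g(x)=x$, so $g=e$. \emph{Non-globality} follows because $D_g=K$ for every $g$ would make $K$ an $\alpha$-invariant closed subset of $K_\infty$, contradicting minimality of $\alpha$ combined with $K\neq K_\infty$ (since $K$ is compact and $K_\infty$ is not). \emph{Minimality} is the most delicate point: given a closed $F\subset K$ with $\alpha_g(F\cap D_{g^{-1}})\subset F$ for all $g$, the set $F\cup(K_\infty\setminus K)$ is closed in $K_\infty$ (here one uses that $K$ is compact-open) and $\alpha$-invariant; hence by minimality of $\alpha$ it is either $K_\infty$ or empty, forcing $F=K$ or $F=\emptyset$. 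Finally, \emph{pure infinity} transfers cleanly: any non-empty compact-open $K'\subset K$ is compact-open in $K_\infty$, so the purely infinite property of $\alpha$ supplies disjoint compact-open $K_1,\ldots,K_{n+m}\subset K'$ and $t_1,\ldots,t_{n+m}\in G$ with the required double partition; since each $\alpha_{t_j}(K_j)\subset K'\subset K$, one has $K_j\subset D_{t_j^{-1}}$, so the same data witnesses pure infinity of $\theta$.

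Then I would show that for \emph{any} partial action of $G$ on $K$ with these four properties, the partial crossed product $C(K)\rtimes_\theta G$ is a Kirchberg algebra. Separability is clear from separability of $C(K)$ and countability of $G$. Nuclearity follows from amenability of $G$ (see the corresponding result in \cite{exel} for partial crossed products). Simplicity follows from the combination of topological freeness (implied by freeness) and minimality, via the simplicity criterion for partial crossed products in \cite{exel}. Pure infiniteness of $C(K)\rtimes_\theta G$ follows from \cite{gs}: a purely infinite partial action on a totally disconnected space makes every non-zero positive element in $C(K)$ properly infinite in the crossed product, and together with freeness and minimality this upgrades to pure infiniteness of the whole $\mathrm{C}^*$-algebra.

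The main obstacle I expect is the careful verification in the second paragraph, particularly the minimality argument (which requires one to transport invariance between the partial and global action) and the bookkeeping that the compact-open witnesses provided by pure infinity of $\alpha$ automatically satisfy the domain conditions $K_j\subset D_{t_j^{-1}}$ of the partial action. The Kirchberg-algebra step is then a matter of assembling known results from \cite{exel} and \cite{gs} with no further ingredients needed.
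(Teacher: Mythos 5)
Your overall strategy coincides with the paper's: restrict the free, minimal, purely infinite global action of $G$ on the locally compact non-compact Cantor set $K_\infty$ (from \cite{kmr}) to a compact-open copy $K$ of the Cantor set, and then assemble simplicity, nuclearity and pure infiniteness of the crossed product from \cite{exel} and \cite{gs}. The freeness and pure-infiniteness transfers are fine, and your non-globality argument (that $D_g=K$ for all $g$ would force $\alpha_g(K)=K$, making $K$ a non-trivial closed invariant set) is a correct, slightly more elementary alternative to the paper's argument via amenability and the absence of invariant probability measures.

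However, your minimality argument has a genuine gap: the set $S:=F\cup(K_\infty\setminus K)$ is in general \emph{not} $\alpha$-invariant. Invariance of $F$ under the partial action only controls $\alpha_g(x)$ for $x\in F$ with $\alpha_g(x)\in K$; it says nothing about points of $K_\infty\setminus K$, which may well be carried by some $\alpha_g$ into $K\setminus F$. The degenerate case $F=\emptyset$ already exposes the problem: there $S=K_\infty\setminus K$, which cannot be invariant (minimality of $\alpha$ would force $K=\emptyset$), yet your argument would conclude $S=K_\infty$ and hence $F=K$ for \emph{every} invariant $F$, which is absurd. The correct route, which is the one the paper takes, is to use that minimality of $\alpha$ makes every orbit dense in $K_\infty$: if $F\subsetneq K$ is non-empty, closed and invariant for the partial action, pick $x\in F$; density of its orbit gives $g$ with $\alpha_g(x)$ in the non-empty open set $K\setminus F$, and then $x\in F\cap D_{g^{-1}}$ while $\theta_g(x)\notin F$, contradicting invariance. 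With this repair the rest of your proposal goes through.
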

\begin{proof}
Let $G$ be as in the statement. By \cite[Theorem 1.2]{kmr}, there is a free, minimal, purely infinite global action $\theta$ of $G$ on the non-compact, locally compact Cantor space $K^*$. The restriction of $\theta$ to a compact-open subset $K$ (which is automatically homeomorphic to the Cantor set) is clearly free and purely infinite. It is not global because it clearly does not admit an invariant probability measure and $G$ is amenable. Let us show that it is minimal.

Suppose that there exists a non-empty closed set $F\subsetneq K$ such that, for every $g\in G$, 
\begin{align}\label{cont}
\theta_g(F\cap\theta_{g^{-1}}(K))\subset F.
\end{align}
 
By minimality of $\theta$, the orbit of evey point is dense. Hence, given $x\in F$, there is $g\in G$ such that $\theta_g(x)$ belongs to the non-empty open set $K\setminus F$, but, since $x \in F\cap\theta_{g^{-1}}(K)$, this contradicts \eqref{cont}.

The fact that the associated partial crossed product is simple follows from \cite[Corollary 29.8]{exel}. The fact that it is nuclear is due to \cite[Proposition 25.10]{exel}. It is purely infinite because of \cite[Theorem 4.4]{gs}.
\end{proof}

\section{Tracial states on partial crossed products}\label{tracial}

In this section, we prove the main result of this article (Theorem \ref{imp}). We start by proving some lemmas about traces in $\mathrm{C}^*$-algebras.

\begin{lem}\label{trace}
Let $A$ be a $\mathrm{C}^*$-algebra, $I$ an ideal of $A$ and $\tau$ a positive linear functional on $I$. Then there is a unique positive linear functional $\tau'$ on $A$ such that $\tau'$ extends $\tau$ and $\|\tau\|=\|\tau'\|$. Moreover, if $\tau$ is a trace, then $\tau'$ is also a trace.
\end{lem}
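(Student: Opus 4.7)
The cleanest route is to pass to the enveloping von Neumann algebra $A^{**}$. Recall that $I^{**}$ embeds as a weak-$*$ closed two-sided ideal of $A^{**}$, hence equals $zA^{**}$ for a unique central projection $z\in Z(A^{**})$, and that every bounded linear functional on a C$^*$-algebra has a unique normal extension to its enveloping von Neumann algebra, preserving the norm and positivity.

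\textbf{Existence.} Let $\hat\tau$ be the unique normal positive extension of $\tau$ to $I^{**}$, with $\|\hat\tau\|=\|\tau\|$. Define $\tau'\colon A\to\mathbb{C}$ by $\tau'(a):=\hat\tau(za)$, using the inclusion $A\hookrightarrow A^{**}$. Since $z$ is central and positive, $\tau'$ is positive; for $a\in I$ we have $za=a$ in $A^{**}$, so $\tau'|_I=\tau$; and $\|\tau'\|\le\|\hat\tau\|=\|\tau\|$, with the reverse inequality trivial.

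\textbf{Uniqueness.} Given another positive extension $\sigma$ on $A$ with $\|\sigma\|=\|\tau\|$, let $\hat\sigma$ be its normal extension to $A^{**}$. By uniqueness of the normal extension from $I$, $\hat\sigma|_{I^{**}}=\hat\tau$. Now
\[
\|\tau\|=\|\sigma\|=\|\hat\sigma\|=\hat\sigma(1)=\hat\sigma(z)+\hat\sigma(1-z)=\|\tau\|+\hat\sigma(1-z),
\]
so $\hat\sigma(1-z)=0$. Since $\hat\sigma$ is positive, Cauchy--Schwarz yields $|\hat\sigma((1-z)a)|^2\le \hat\sigma(1-z)\,\hat\sigma(aa^*)=0$ for every $a\in A$, whence $\sigma(a)=\hat\sigma(za)=\tau'(a)$.

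\textbf{Trace property.} Assume $\tau$ is a trace. The trace identity $\hat\tau(xy)=\hat\tau(yx)$ holds on the norm-dense subspace $I\times I$ of $I^{**}\times I^{**}$; by normality of $\hat\tau$ and separate weak-$*$ continuity of multiplication in $I^{**}$, it extends to all of $I^{**}\times I^{**}$. Centrality of $z$ gives $zab=(za)(zb)$ for $a,b\in A$, so
\[
\tau'(ab)=\hat\tau\bigl((za)(zb)\bigr)=\hat\tau\bigl((zb)(za)\bigr)=\tau'(ba).
\]
The main technical point, already packaged in the results above, is the identification $I^{**}=zA^{**}$ and the passage of the trace identity from $I$ to $I^{**}$ via normality; once those are granted, the rest is bookkeeping with the central projection $z$.
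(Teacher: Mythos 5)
Your argument is correct, but it takes a genuinely different route from the paper. The paper outsources existence and uniqueness to \cite[Theorem 3.3.9]{murphy} and then proves the trace property entirely with approximate units: it first shows, via Cauchy--Schwarz and $(1-u_\lambda)^2\le 1-u_\lambda$, that $\tau'(x_\lambda)$ and $\tau(x_\lambda u_\lambda)$ have the same limit along bounded nets, and then verifies $\tau'(ab)=\tau'(ba)$ by a direct computation with $\tau(u_\lambda^{1/2}abu_\lambda^{1/2})$. You instead work in the bidual, identifying $I^{**}$ with $zA^{**}$ for a central projection $z$ and setting $\tau'(a)=\hat\tau(za)$; this gives existence, uniqueness (via $\hat\sigma(1-z)=0$ and Cauchy--Schwarz) and the trace property in one uniform framework, at the cost of invoking von Neumann algebra machinery (normal extensions, the central cover of an ideal, separate weak-$*$ continuity of multiplication) where the paper's argument is elementary. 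One slip to fix: $I$ is \emph{not} norm-dense in $I^{**}$; it is $\sigma$-weakly (weak-$*$) dense, and that is exactly the density your extension argument needs. With that correction the extension of the trace identity to $I^{**}$ goes through by the standard two-step argument you allude to: for fixed $x\in I$ the normal functionals $y\mapsto\hat\tau(xy)$ and $y\mapsto\hat\tau(yx)$ agree on the weak-$*$ dense set $I$, hence on $I^{**}$, and then one repeats the argument in the first variable with $y\in I^{**}$ fixed. Everything else, including the identity $(za)(zb)=zab$ from centrality of $z$, is fine.
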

\begin{proof}
The existence and uniqueness of $\tau'$ is the content of \cite[Theorem 3.3.9]{murphy}. Assume that $\tau$ is a trace and we will show that $\tau'$ is also a trace. 

By taking the unitization, we can assume $A$ is unital.

Let $(u_\lambda)_{\lambda\in\Lambda}$ be an approximate unit for $I$. Notice that $$\tau'(1)=\|\tau'\|=\|\tau\|=\lim_\lambda\tau(u_\lambda).$$

Now suppose that $(x_\lambda)_{\lambda\in\Lambda}$ is a bounded net in $A$ such that $\lim_\lambda\tau'(x_\lambda)$ exists. Let us show that $\lim_\lambda\tau'(x_\lambda)=\lim_\lambda\tau(x_\lambda u_\lambda).$
 Indeed,
\begin{align*}
|\tau'(x_\lambda)-\tau(x_\lambda u_\lambda)|=|\tau'(x_\lambda(1-u_\lambda))|&\stackrel{(*)}\leq\tau'(x_\lambda x_\lambda^*)^{\frac{1}{2}}\tau'((1-u_\lambda)^2)^{\frac{1}{2}}\\
&\stackrel{(**)}\leq\|\tau'\|^\frac{1}{2}\|x_\lambda\|\tau'(1-u_\lambda)^\frac{1}{2}\to 0.
\end{align*}
The inequality in (*) is due to the Cauchy-Schwarz inequality and the one in (**) is due to the fact that, for every $\lambda\in \Lambda$, it holds that $(1-u_\lambda)^2\leq 1-u_\lambda$.

Let $a,b\in A$ and let us show that $\tau'(ab)=\tau'(ba).$
\begin{align*}
\tau'(ab)&=\lim_\lambda\tau(abu_\lambda)=\lim_\lambda\tau(u_\lambda^\frac{1}{2}abu_\lambda^\frac{1}{2})=\lim_\lambda\tau(bu_\lambda a)\\
&=\lim_\lambda\tau(bu_\lambda a u_\lambda)=\lim_\lambda\tau(au_\lambda bu_\lambda)=\tau'(ba).
\end{align*}
\end{proof}
\begin{definition}
Let $A$ be a $\mathrm{C}^*$-algebra, $G$ a group and 
$(A, G, \{I_g\}_{g\in G},\{\theta_g\}_{g\in G})$ a partial dynamical system. We say a linear functional $\tau$ on $A$ is \emph{$G$-invariant} if for every $g\in G$ and $a\in I_{g}$
$$\tau(a)=\tau(\theta_{g^{-1}}(a)).$$
\end{definition}

\begin{lem}\label{inv}
Let $A$ be a $\mathrm{C}^*$-algebra, $G$ a group and 
$(A, G, \{I_g\}_{g\in G},\{\theta_g\}_{g\in G})$ a partial dynamical system.  Let $\psi$ be the canonical conditional expectation from $A\rtimes G$ onto $A$ as in \eqref{psi}. If $\varphi$ is a tracial state on $A$, then $\varphi\circ\psi$ is a tracial state on $A\rtimes_\theta G$ if and only if $\varphi$ is a $G$-invariant. 
\end{lem}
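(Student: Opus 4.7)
The plan is to verify the trace identity on a dense subspace of $A\rtimes_\theta G$, namely on elements of the form $a_g\delta_g$ with $g\in G$ and $a_g\in I_g$. Note that $\varphi\circ\psi$ is automatically a state (composition of positive maps, with $(\varphi\circ\psi)(1)=\varphi(1)=1$), so the only content of the equivalence is the trace property. By bilinearity and continuity of multiplication, $\varphi\circ\psi$ is tracial if and only if $(\varphi\circ\psi)\bigl((a_g\delta_g)(b_h\delta_h)\bigr)=(\varphi\circ\psi)\bigl((b_h\delta_h)(a_g\delta_g)\bigr)$ for all $g,h\in G$, $a_g\in I_g$, $b_h\in I_h$.

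For the ``if'' direction, assume $\varphi$ is $G$-invariant. Using the product formula in $A\rtimes_{\theta,\mathrm{alg}}G$ and the defining property \eqref{psi} of $\psi$, both sides above vanish unless $h=g^{-1}$; in that case they become $\varphi\bigl(\theta_g(\theta_{g^{-1}}(a_g)b_{g^{-1}})\bigr)$ and $\varphi\bigl(\theta_{g^{-1}}(\theta_g(b_{g^{-1}})a_g)\bigr)$ respectively. Applying $G$-invariance of $\varphi$ (noting that $\theta_{g^{-1}}(a_g)b_{g^{-1}}\in I_{g^{-1}}$ and $\theta_g(b_{g^{-1}})a_g\in I_g$) collapses both $\theta_g$ and $\theta_{g^{-1}}$, and using that $\theta_{g^{-1}}$ is a $*$-homomorphism on $I_g$ rewrites the second expression as $\varphi(b_{g^{-1}}\theta_{g^{-1}}(a_g))$. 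The trace property of $\varphi$ on $A$ then identifies both expressions with $\varphi(\theta_{g^{-1}}(a_g)b_{g^{-1}})$.

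For the ``only if'' direction, fix $g\in G$ and $a\in I_g$, and let $(u_\lambda)$ be an approximate unit of $I_{g^{-1}}$. A direct computation from the product formula gives
\begin{align*}
\psi\bigl((a\delta_g)(u_\lambda\delta_{g^{-1}})\bigr)&=\theta_g(\theta_{g^{-1}}(a)u_\lambda)\xrightarrow{\lambda} a,\\
\psi\bigl((u_\lambda\delta_{g^{-1}})(a\delta_g)\bigr)&=\theta_{g^{-1}}(\theta_g(u_\lambda)a)=u_\lambda\theta_{g^{-1}}(a)\xrightarrow{\lambda}\theta_{g^{-1}}(a),
\end{align*}
where the second equality uses that $\theta_{g^{-1}}$ is a $*$-homomorphism on $I_g$. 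Applying $\varphi$, continuity, and the assumed trace identity for $\varphi\circ\psi$ yields $\varphi(a)=\varphi(\theta_{g^{-1}}(a))$, as desired.

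There is no genuine obstacle here: the whole argument is essentially bookkeeping with the partial crossed product multiplication, approximate units in the ideals $I_g$, and the elementary interaction between the $*$-homomorphism property of the $\theta_g$'s, $G$-invariance, and traciality. The one point that deserves care is checking that the two occurrences of $\theta_g$ and $\theta_{g^{-1}}$ in the ``if'' direction cancel consistently, which is handled by noting on which ideal each element lives before invoking invariance.
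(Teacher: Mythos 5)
Your proof is correct and follows essentially the same route as the paper: reduce the trace identity to elements $a_g\delta_g$, $b_h\delta_h$ (both sides vanish unless $h=g^{-1}$, then compare via $G$-invariance and traciality of $\varphi$), and for the converse test $\varphi\circ\psi$ on $(a\delta_g)(u_\lambda\delta_{g^{-1}})$ versus $(u_\lambda\delta_{g^{-1}})(a\delta_g)$ with an approximate unit of $I_{g^{-1}}$. The only cosmetic caveat is that $A$ is not assumed unital, so the claim that $\varphi\circ\psi$ is a state should be justified by $\psi\circ i=\mathrm{id}_A$ and contractivity of $\psi$ rather than by evaluating at $1$.
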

\begin{proof}
Assume $\varphi$ is $G$-invariant. Let us show that $\varphi\circ\psi$ is a tracial state. By the fact that $\psi$ is positive and by \eqref{psi}, it follows that $\varphi\circ\psi$ is a state.  

By linearity and continuity, in order to conclude that $\varphi\circ\psi$ is tracial, we only need to prove that, given $a_g\delta_g, b_h\delta_h \in A\rtimes_\theta G$,
$$\varphi\circ\psi((a_g\delta_g)(b_h\delta_h))=\varphi\circ\psi((b_h\delta_h)(a_g\delta_g)).$$

This is true, since
$$\varphi\circ\psi(a_g\delta_g b_h\delta_h)=\varphi\circ\psi(\theta_g(\theta_{g^{-1}}(a_g)b_h)\delta_{gh})=\begin{cases}0 &\text{if } h\neq g^{-1} \\
 \varphi(a_g\theta_g(b_{g^{-1}})) & \text{if } h=g^{-1}. \end{cases}$$

By using the fact that $\varphi$ is a $G$-invariant tracial state, one concludes that $\varphi\circ\psi(a_g\delta_g b_h\delta_h)=\varphi\circ\psi(b_h\delta_h a_g\delta_g)$. Hence, $\varphi\circ\psi$ is a tracial state.

Conversely, assume that $\varphi\circ\psi$ is a tracial state and let us show that $\varphi$ is $G$-invariant.

Given $g\in G$, $a\in I_g$ and $(u_\lambda)_{\lambda\in \Lambda}$ an approximate unit for $I_{g^{-1}}$, we have that
\begin{align*}
\varphi(a)=\varphi\circ\psi(a\delta_e)=\lim_\lambda\varphi\circ\psi(a\delta_gu_\lambda\delta_{g^{-1}})
=\lim_\lambda\varphi\circ\psi(u_\lambda\delta_{g^{-1}}a\delta_g)\\
=\varphi\circ\psi(\theta_{g^{-1}}(a)\delta_e)=\varphi(\theta_{g^{-1}}(a)).  
\end{align*}

Hence, $\varphi$ is $G$-invariant.
\end{proof}

Just as in the case of global actions, we have that for partial actions the following holds:
\begin{prop}\label{med}
Let $\theta$ be a partial action of a group $G$ on a compact Hausdorff space $X$. Then $X$ admits an invariant probability measure if and only if $C(X)\rtimes_\theta G$ has a tracial state.
\end{prop}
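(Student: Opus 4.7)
The plan is to bridge the topological/measure-theoretic notion of invariance on $X$ with the $\mathrm{C}^*$-algebraic notion of $G$-invariance on $C(X)$, and then apply Lemma \ref{inv} to transfer the equivalence to tracial states on the partial crossed product. The key observation is that, since $C(X)$ is commutative, every state on it is automatically tracial, so the content of the statement is really about identifying $G$-invariant states on $C(X)$ with $G$-invariant probability measures on $X$.

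For the forward direction, suppose $\nu$ is an invariant probability measure on $X$. Define $\varphi\in C(X)^*$ by $\varphi(f):=\int_X f\,d\nu$. This is a tracial state. To see that $\varphi$ is $G$-invariant in the sense of the definition preceding Lemma \ref{inv}, fix $g\in G$ and $f\in I_g=C_0(D_g)$. Unwinding the dualization of the topological partial action, $\theta_{g^{-1}}(f)\in C_0(D_{g^{-1}})$ is given by $x\mapsto f(\theta_g(x))$ on $D_{g^{-1}}$ and zero elsewhere, so the invariance of $\nu$ yields $\varphi(\theta_{g^{-1}}(f))=\int_{D_{g^{-1}}}f\circ\theta_g\,d\nu=\int_{D_g}f\,d\nu=\varphi(f)$. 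Lemma \ref{inv} then produces the tracial state $\varphi\circ\psi$ on $C(X)\rtimes_\theta G$.

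For the converse, let $\tau$ be a tracial state on $C(X)\rtimes_\theta G$, and set $\varphi:=\tau\circ i$ where $i\colon C(X)\to C(X)\rtimes_\theta G$ is the canonical embedding. I want to show $\varphi$ is $G$-invariant; note that I cannot directly invoke the backward half of Lemma \ref{inv} because $\tau$ need not equal $\varphi\circ\psi$. However, essentially the same manipulation with an approximate unit works at the level of $\tau$ itself: given $g\in G$, $a\in I_g$, and an approximate unit $(u_\lambda)$ of $I_{g^{-1}}$, a direct computation in $C(X)\rtimes_{\theta,\mathrm{alg}} G$ gives $(a\delta_g)(u_\lambda\delta_{g^{-1}})\to a\delta_e$ and $(u_\lambda\delta_{g^{-1}})(a\delta_g)\to \theta_{g^{-1}}(a)\delta_e$ in norm, using that $\theta_g(u_\lambda)$ is an approximate unit for $I_g$. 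Applying continuity and traciality of $\tau$ yields $\varphi(a)=\varphi(\theta_{g^{-1}}(a))$, which is the $G$-invariance of $\varphi$.

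Finally, the Riesz representation theorem associates to $\varphi$ a regular probability measure $\nu$ on $X$. The $G$-invariance of $\varphi$ gives $\nu(\theta_g(E\cap D_{g^{-1}}))=\nu(E\cap D_{g^{-1}})$ first for compact (or equivalently open, via continuous functions) subsets $E$, and the extension to all Borel sets is obtained exactly as in the end of the proof of Proposition \ref{cph} by inner and outer regularity. I do not expect a significant obstacle: the only subtlety is being careful about the direction of the dualization when writing out the $C^*$-algebraic invariance, and checking that the approximate-unit argument in Lemma \ref{inv} can be adapted to an arbitrary tracial state rather than only one of the form $\varphi\circ\psi$.
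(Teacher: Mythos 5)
Your proof is correct and follows essentially the same route as the paper: integration against the invariant measure plus Lemma \ref{inv} in one direction, and the Riesz representation theorem plus inner/outer regularity in the other. The only (harmless) difference is in the converse, where the paper verifies that $\tau\circ i\circ\psi$ is tracial and then invokes the backward half of Lemma \ref{inv}, whereas you run the approximate-unit computation directly on $\tau$; both reduce to the same calculation.
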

\begin{proof}
Supose $X$ has an invariant probability measure $\mu$. Consider the state on $C(X)$ given by integration with respect to $\mu$. By invariance of $\mu$, it follows that the state is also $G$-invariant. By Lemma \ref{inv}, we conclude that $C(X)\rtimes_\theta G$ has a tracial state.

Now assume that $C(X)\rtimes_\theta G$ has a tracial state $\tau$. Let $i\colon C(X)\to C(X)\rtimes_\theta G$ be the canonical embedding and $\psi\colon C(X)\rtimes_\theta G\to C(X)$ the canonical conditional expectation. We claim that $\tau\circ i$ is $G$-invariant. By Lemma $\ref{inv}$, we only need to check that $\tau\circ i\circ\psi$ is a tracial state. Clearly, it is a state.

Given $a_g\delta_g, b_h\delta_h\in C(X)\rtimes G$, we have that $\psi(a_g\delta_gb_h\delta_h)=\psi(b_h\delta_ha_g\delta_g)=0$ if $h\neq g^{-1}$. If $h=g^{-1}$, then  $\tau\circ i\circ\psi(a_g\delta_gb_h\delta_h)=\tau(a_g\delta_gb_h\delta_h)
=\tau(b_h\delta_ha_g\delta_g)=\tau\circ i\circ\psi(b_h\delta_ha_g\delta_g)$.

By linearity, we conclude that $\tau\circ i\circ\psi$ is a tracial state. Therefore, $\tau\circ i$ is a $G$-invariant state. Let $\mu$ be the regular probability measure on $X$ associated to $\tau\circ i$. Since this state is $G$-invariant, we conlude, just as in the proof of Proposition \ref{cph}, that $\mu$ is invariant.
\end{proof}

In the following, we denote by $T(A)$ the set of tracial states of a C$^*$-algebra $A$.
\begin{lem}\label{pure}
Let $A$ be a unital $\mathrm{C}^*$-algebra such that $T(A)\neq\emptyset$. Then every extreme point $\tau$ of $T(A)$ satisfies that for every ideal $I$ of $A$, $\|\tau\big|_I\|$ is either $0$ or $1$.
\end{lem}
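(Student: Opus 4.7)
The plan is to argue by contradiction: suppose $\alpha := \|\tau|_I\|$ satisfies $0 < \alpha < 1$, and decompose $\tau$ as a nontrivial convex combination of two distinct tracial states, contradicting extremality. Since $\tau|_I$ is a positive trace on the (closed two-sided) ideal $I$ of norm $\alpha$, Lemma \ref{trace} supplies a unique positive tracial extension $\tau_1$ to $A$ with $\tau_1(1) = \alpha$. I then define $\tau_2 := \tau - \tau_1$, which is a tracial linear functional on $A$ with $\tau_2(1) = 1 - \alpha$ and $\tau_2|_I = 0$.

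The heart of the argument is to verify that $\tau_2 \geq 0$. The proof of Lemma \ref{trace} (applied to the constant net $x_\lambda = a$) shows $\tau_1(a) = \lim_\lambda \tau(a u_\lambda)$ for any approximate unit $(u_\lambda)$ of $I$. For $a \geq 0$, I would exploit the trace property to rewrite
\[
\tau(a) - \tau(a u_\lambda) = \tau\bigl(a(1 - u_\lambda)\bigr) = \tau\bigl((1-u_\lambda)^{1/2}\, a\, (1-u_\lambda)^{1/2}\bigr) \geq 0,
\]
since the last expression is the trace of a positive element. Passing to the limit gives $\tau(a) \geq \tau_1(a)$, i.e.\ $\tau_2(a) \geq 0$. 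This is the only step where the trace hypothesis on $\tau$ is essential.

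Having established positivity, set $\sigma_i := \tau_i/\tau_i(1) \in T(A)$ for $i=1,2$, so that $\tau = \alpha \sigma_1 + (1-\alpha)\sigma_2$ is a genuine convex combination. To see $\sigma_1 \neq \sigma_2$, I evaluate both on $u_\lambda$: $\sigma_2(u_\lambda) = 0$ for every $\lambda$ since $\tau_2$ vanishes on $I$, while $\sigma_1(u_\lambda) = \tau(u_\lambda)/\alpha \to 1$ because $\tau(u_\lambda) \to \|\tau|_I\| = \alpha$. This contradicts $\tau$ being an extreme point of $T(A)$, and hence $\alpha \in \{0,1\}$.

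The main obstacle I anticipate is the positivity of $\tau_2$, since a difference of positive functionals is generally not positive; the trace identity displayed above is precisely what rescues the argument, and everything else reduces to routine manipulation of approximate units.
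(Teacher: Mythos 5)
Your proof is correct and takes essentially the same route as the paper: extend $\tau|_I$ to a positive trace on $A$ via Lemma \ref{trace}, use the trace property together with an approximate unit of $I$ to show that this extension is dominated by $\tau$, and then contradict extremality through the resulting convex decomposition of $\tau$. The only cosmetic difference is that you verify $\sigma_1\neq\sigma_2$ directly by evaluating on the approximate unit, whereas the paper deduces the contradiction from the forced identity $\tau=\tau_I/\|\tau_I\|$ restricted to $I$.
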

\begin{proof}

Let $\tau$ be an extreme point of $T(A)$. Given an ideal $I\unlhd A$, suppose that $t:=\|\tau\big|_I\|\in(0,1)$. By Lemma \ref{trace}, there exists an extension $\tau_I$ of $\tau\big|_I$ to all of $A$ such that $\|\tau_I\|=t$.

Let us show that $\tau_I\leq\tau$. Let $(u_\lambda)_{\lambda\in \Lambda}$ an approximate unit for $I$. Given $a\in A^+$, we know by \cite[Theorem 3.3.9]{murphy} that $\tau_I(a)=\lim_\lambda\tau(u_\lambda au_\lambda)$. Since $\tau$ is a tracial state, we have that, for any $\lambda\in \Lambda$, 
\begin{align*}
\tau(u_\lambda au_\lambda)=\tau(a^\frac{1}{2}u_\lambda u_\lambda a^\frac{1}{2})
\leq\|(u_\lambda)^2\|\tau(a)\leq\tau(a).
\end{align*}
Therefore, $\tau_I(a)\leq\tau(a)$. Since $a$ was arbitrary, it follows that $\tau_I\leq\tau$.

By evaluating on $1_A$, we get that $\|\tau-\tau_I\|=1-t$. Hence,
$$\tau=t\left(\frac{\tau_I}{\|\tau_I\|}\right)+(1-t)\left(\frac{\tau-\tau_I}{\|\tau-\tau_I\|}\right).$$
Since $\tau$ is an extreme point, we get that $\tau=\frac{\tau_I}{\|\tau_I\|}$. But $\tau\big|_I=\tau_I\big|_I\neq 0$. Thus we get a contradiction. 

\end{proof}

The reader will notice that the idea of the proof of the next result is the same of Proposition \ref{cph}
\begin{theorem}\label{imp}
A group is supramenable if and only if whenever it partially acts on a unital $\mathrm{C}^*$-algebra which has a tracial state, then the associated partial crossed product also has a tracial state.
\end{theorem}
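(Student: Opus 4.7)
The first direction is straightforward. If $G$ is not supramenable, Proposition \ref{cph} produces a partial action of $G$ on a nonempty compact Hausdorff space $X$ with no invariant probability measure; since $C(X)$ is commutative and unital, every state on it is a trace, so $T(C(X)) \neq \emptyset$, and Proposition \ref{med} forces $T(C(X) \rtimes_\theta G) = \emptyset$, giving the required counterexample.

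For the converse, I would mirror the proof of Proposition \ref{cph} as closely as possible. Given $G$ supramenable and a partial action $\theta = (\{I_g\}_{g \in G}, \{\theta_g\}_{g \in G})$ on a unital $\mathrm{C}^*$-algebra $A$ with $T(A) \neq \emptyset$, the first step is to pick an extreme point $\tau \in T(A)$ via Krein--Milman. For each $g \in G$, the functional $\tau \circ \theta_{g^{-1}}|_{I_g}$ is a tracial positive functional on $I_g$ of norm $\|\tau|_{I_{g^{-1}}}\|$, which is $0$ or $1$ by Lemma \ref{pure}. Let $\tilde{\rho}_g$ denote its unique norm-preserving positive extension to $A$ given by Lemma \ref{trace}; this is either $0$ or an (automatically tracial) state on $A$. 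Set $S := \{g \in G : \tilde{\rho}_g \neq 0\}$, noting that $e \in S$ since $\tilde{\rho}_e = \tau$. For $a \in A$, define $\hat{a}(g) := \tilde{\rho}_g(a)$, which lies in $\ell^\infty(G)$ with $\mathrm{supp}(\hat{a}) \subset S$. Supramenability then supplies an invariant, finitely additive measure $\mu$ on $G$ with $\mu(S) = 1$, and I would define $\varphi(a) := I_\mu(\hat{a})$ using Lemma \ref{func}.

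That $\varphi$ is a tracial state on $A$ will be immediate: linearity and positivity descend from each $\tilde{\rho}_g$, $\varphi(1_A) = I_\mu(1_S) = \mu(S) = 1$, and the trace property holds pointwise since each $\tilde{\rho}_g$ is tracial. The main obstacle is establishing the $G$-invariance $\varphi(\theta_h(a)) = \varphi(a)$ for $h \in G$ and $a \in I_{h^{-1}}$; by the $G$-invariance of $I_\mu$, this reduces to the key identity $\tilde{\rho}_g(\theta_h(a)) = \tilde{\rho}_{h^{-1}g}(a)$ for every $g \in G$. A direct calculation using the partial-action composition rule shows that the two positive tracial functionals $\tilde{\rho}_g \circ \theta_h$ and $\tilde{\rho}_{h^{-1}g}$ on $I_{h^{-1}}$ agree on the sub-ideal $I_{h^{-1}} \cap I_{h^{-1}g}$. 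To promote this to all of $I_{h^{-1}}$ I would first show that each $\tilde{\rho}_g$ (for $g \in S$) is itself extreme in $T(A)$---by pulling back any convex decomposition of $\tilde{\rho}_g$ through $\theta_g$ and invoking the uniqueness in Lemma \ref{trace} together with the extremality of $\tau$---so that Lemma \ref{pure} becomes applicable to each $\tilde{\rho}_g$. Then the Murphy-style formula $\tilde{\rho}_g(x) = \lim_\lambda \tau(\theta_{g^{-1}}(u_\lambda x u_\lambda))$ with $(u_\lambda)$ an approximate unit for $I_g$ implies that $\tilde{\rho}_g$ vanishes on any ideal $J$ as soon as it vanishes on $J \cap I_g$, which together with the $0/1$ dichotomy yields $\|\tilde{\rho}_g|_J\| = \|\tilde{\rho}_g|_{J \cap I_g}\|$. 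Applying this to $J = I_h$ (and transporting via the $*$-isomorphism $\theta_h : I_{h^{-1}} \cap I_{h^{-1}g} \to I_h \cap I_g$) and to $J = I_{h^{-1}}$ shows that $\tilde{\rho}_g \circ \theta_h$ and $\tilde{\rho}_{h^{-1}g}$ have the same norm as positive functionals on $I_{h^{-1}}$; combined with their agreement on $I_{h^{-1}} \cap I_{h^{-1}g}$, the uniqueness clause of Lemma \ref{trace} (applied to the $\mathrm{C}^*$-algebra $I_{h^{-1}}$ and its ideal $I_{h^{-1}} \cap I_{h^{-1}g}$) forces them to coincide on $I_{h^{-1}}$. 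Once $\varphi$ is known to be $G$-invariant, Lemma \ref{inv} delivers the desired tracial state $\varphi \circ \psi$ on $A \rtimes_\theta G$.
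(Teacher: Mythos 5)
Your proposal is correct and follows essentially the same route as the paper's proof: pick an extreme trace $\tau$ (so Lemma \ref{pure} applies), form the norm-preserving extensions of $\tau\circ\theta_{g^{-1}}$ via Lemma \ref{trace}, average $g\mapsto\tilde\rho_g(a)$ against an invariant measure using Lemma \ref{func}, prove $G$-invariance by matching the two functionals on $I_{h^{-1}}\cap I_{h^{-1}g}$ and invoking the uniqueness clause of Lemma \ref{trace}, and finish with Lemma \ref{inv}. The only (harmless) deviation is that you obtain the norm identity $\|\tilde\rho_g\big|_J\|=\|\tilde\rho_g\big|_{J\cap I_g}\|$ by first proving extremality of $\tilde\rho_g$ and using the $0/1$ dichotomy, whereas the paper gets it directly from a double approximate-unit computation without needing $\tilde\rho_g$ to be extreme.
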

\begin{proof}
Let $G$ be a supramenable group, $A$ a unital $\mathrm{C}^*$-algebra which has a tracial state and $(A, G, \{I_g\}_{g\in G},\{\theta_g\}_{g\in G})$ a partial dynamical system.

By Lemma \ref{pure}, $A$ has a tracial state $\tau$ such that for every ideal $I$ of $A$, 
$\|\tau\big|_I\|$ is either $0$ or $1$.

For each $g\in G$, let $\tau_g$ be the extension of $\tau\circ\theta_{g^{-1}}$ to $A$, as in Lemma \ref{trace}. For each $a\in A$, let $\hat{a}\in \ell^\infty(G)$ be defined by $\hat{a}(g):=\tau_g(a)$, $g\in G$. Since each $\tau_g$ is a trace, we have that, for any $a,b\in A$, $\widehat{ab}=\widehat{ba}$.

Notice that for each $g\in G$, $\widehat{1_A}(g)=\|\tau_g\|=\|\tau\big|_{I_{g^{-1}}}\|$.

Since $G$ is supramenable, there is an invariant, finitely addivite measure $\mu$ on $G$ such that $1=\mu(\{g\in G:\|\tau\big|_{I_{g^{-1}}}\|=1\}).$  

In the notation of Lemma \ref{func}, define
\begin{align*}
\varphi\colon A&\to\mathbb{C}\\
a&\mapsto I_\mu(\hat{a}).
\end{align*}
Notice that $\varphi$ is a tracial state on $A$. 

Let $\psi$ be the canonical conditional expectation from $A\rtimes G$ onto $A$ as in \eqref{psi}.
Define $\rho:=\varphi\circ\psi$. By Lemma \ref{inv}, in order to show that $\rho$ is a tracial state, it is sufficient to show that $\varphi$ is $G$-invariant.

Given $x,y\in G$, let us prove that
$$\|\tau_x\big|_{I_y}\|=\|\tau_x\big|_{I_y\cap I_{x}}\|.$$

Let $(u_\lambda^y)$ and $(u_\lambda^x)$ be approximate 
units for $I_y$ and $I_x$, respectively.
Then
$$\|\tau_x\big|_{I_y}\|=\lim_{\lambda_1}\tau_x(u_{\lambda_1}^y)=
\lim_{\lambda_1}\lim_{\lambda_2}
\tau_x(u_{\lambda_1}^y u_{\lambda_2}^x).$$

Since, for each $\lambda_1, \lambda_2$, we have that $u_{\lambda_1}^y u_{\lambda_2}^x\in I_x\cap I_y$,
it follows that $\|\tau_x\big|_{I_y}\|\leq\|\tau_x\big|_{I_y\cap I_{x}}\|$. The opposite 
inequality is evident.

For each $g,t\in G$, one can easily check that 
$(\tau_t\circ\theta_{g^{-1}})\big|_{I_g\cap I_{gt}}=\tau_{gt}\big|_{I_g\cap I_{gt}}.$
Therefore,
\begin{align*}
\|\tau_t\circ\theta_{g^{-1}}\|&=\|\tau_t\big|_{I_{g^{-1}}}\|=\|\tau_t\big|_{I_{g^{-1}}\cap I_t}\|
=\|\tau_t\circ\theta_{g^{-1}}\circ\theta_g\big|_{I_{g^{-1}}\cap I_t}\|\\
&=\|\tau_t\circ\theta_{g^{-1}}\big|_{I_g\cap I_{gt}}\|
=\|\tau_{gt}\big|_{I_g\cap I_{gt}}\|=\|\tau_{gt}\big|_{I_g}\|.
\end{align*}
It follows by Lemma \ref{trace} that $\tau_t\circ\theta_{g^{-1}}=\tau_{gt}\big|_{I_g}$. Hence, given $a\in I_g$, 
$$g^{-1}.\hat{a}(t)=\hat{a}(gt)=\widehat{\theta_{g^{-1}}(a)}(t).$$
Therefore, 
\begin{align*}g^{-1}.\hat{a}=\widehat{\theta_{g^{-1}}(a)}.
\end{align*}
Thus, for every $g\in G$ and $a\in I_g$,
\begin{align*}
\varphi(a)=I_\mu(\hat{a})\stackrel{(*)}=I_\mu(g^{-1}\hat{a})
=I_\mu(\widehat{\theta_{g^{-1}}(a)})=\varphi(\theta_{g^{-1}}(a)).
\end{align*}
The equality (*) is due to the fact that $\mu$ is invariant.
Hence, $\varphi$ is $G$-invariant and $\rho$ is a tracial state on $A\rtimes G$.

The converse follows from Propositions \ref{med} and \ref{cph}.
\end{proof}

\section{Decomposition of partial crossed products} \label{dec}

In this section, we investigate conditions under which one can decompose a partial crossed product as two iterated partial crossed products by simpler groups.

Let $A$ be a $\mathrm{C}^*$-algebra, $G$ a group and $(A,G,\{I_g\}_{g\in G},\{\theta_g\}_{g\in G})$ a partial dynamical system. Given a $G$-invariant ideal $I$ of $A$ (i.e., $\theta_g(I\cap I_{g^{-1}})\subset I$ for every $g\in G$), one can consider the restricted partial dynamical system $(I,G,\{I\cap I_{g}\}_{g\in G},\{\theta_g\big|_{I\cap I_{g^{-1}}}\}_{g\in G})$. It is a consequence of \cite[Theorem 22.9]{exel} that the map
\begin{align*}
i\colon I\rtimes G&\to A\rtimes G\\
a\delta_g&\mapsto a\delta_g
\end{align*}
is an embedding onto an ideal of $A\rtimes G$. In the statement of the following lemma, we use this identification.
\begin{lem}
Let $(A,G,\{I_g\}_{g\in G},\{\theta_g\}_{g\in G})$ be a partial dynamical system and $I$ and $J$ be $G$-invariant ideals of $A$. Then $(I\rtimes G)\cap (J\rtimes G)=(I\cap J)\rtimes G$.
\end{lem}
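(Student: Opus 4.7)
The plan is to combine two standard ingredients: the $\mathrm{C}^*$-identity $K\cap L=K\cdot L$ valid for closed two-sided ideals in any $\mathrm{C}^*$-algebra, and a direct computation with the multiplication on the dense $*$-subalgebra $A\rtimes_{\theta,\mathrm{alg}}G$.

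The containment $(I\cap J)\rtimes G\subseteq (I\rtimes G)\cap(J\rtimes G)$ is essentially a formality: $I\cap J$ is a $G$-invariant ideal of each of $I$ and $J$, and naturality of the embeddings provided by \cite[Theorem 22.9]{exel} places $(I\cap J)\rtimes G$ inside $I\rtimes G$ and inside $J\rtimes G$, with all three sitting compatibly inside $A\rtimes G$.

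For the reverse inclusion I would invoke the $\mathrm{C}^*$-identity to rewrite $(I\rtimes G)\cap(J\rtimes G)=(I\rtimes G)\cdot(J\rtimes G)$ and then prove $(I\rtimes G)\cdot(J\rtimes G)\subseteq (I\cap J)\rtimes G$. By bilinearity and norm-continuity this reduces to elementary products: given $a\delta_g\in I\rtimes_{\theta,\mathrm{alg}}G$ (so $a\in I\cap I_g$) and $b\delta_h\in J\rtimes_{\theta,\mathrm{alg}}G$ (so $b\in J\cap I_h$), the product rule gives
$$(a\delta_g)(b\delta_h)=\theta_g\bigl(\theta_{g^{-1}}(a)\,b\bigr)\delta_{gh}.$$
The $G$-invariance of $I$ yields $\theta_{g^{-1}}(a)\in I\cap I_{g^{-1}}$; because $J$ is an ideal, $\theta_{g^{-1}}(a)\,b\in I\cap J$, and it also lies in $I_{g^{-1}}\cap I_h$; equation~\eqref{doms} then guarantees that $\theta_g$ is defined on this element and sends it into $I_g\cap I_{gh}$; and the $G$-invariance of $I\cap J$ keeps the image inside $I\cap J$. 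Hence the product already sits in $\bigl((I\cap J)\cap I_{gh}\bigr)\delta_{gh}\subset(I\cap J)\rtimes_{\theta,\mathrm{alg}}G$, which is what is needed.

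I do not expect a real obstacle here: the argument is essentially bookkeeping with the partial-action axioms. The one temptation to avoid is a Fourier-coefficient argument—extracting the $\delta_g$-coefficients of an element of $A\rtimes G$ and observing that they land in $I\cap J$—since in the \emph{full} partial crossed product the Fourier coefficients need not collectively determine an element. The ideal-product identity bypasses this issue cleanly and, combined with \cite[Theorem 22.9]{exel}, gives the result.
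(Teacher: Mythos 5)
Your proof is correct. The inclusion $(I\cap J)\rtimes G\subseteq(I\rtimes G)\cap(J\rtimes G)$ is indeed immediate from the embeddings of \cite[Theorem 22.9]{exel}, and your computation for the reverse inclusion checks out: for $a\in I\cap I_g$ and $b\in J\cap I_h$ one has $\theta_{g^{-1}}(a)\in I\cap I_{g^{-1}}$ by $G$-invariance of $I$, hence $\theta_{g^{-1}}(a)b\in(I\cap J)\cap I_{g^{-1}}\cap I_h$, and applying $\theta_g$ (using that $I\cap J$ is again $G$-invariant, together with the $\mathrm{C}^*$-algebraic analogue of \eqref{doms}) places the product in $\bigl((I\cap J)\cap I_{gh}\bigr)\delta_{gh}$. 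The route differs from the paper's only in packaging: the paper writes $x=\lim_\lambda x(u_\lambda\delta_e)$ for an approximate unit $(u_\lambda)$ of $J$, approximates $x$ by finite sums $\sum a_g\delta_g$ with $a_g\in I$, and checks that each $a_g\delta_g\,u_\lambda\delta_e$ lies in $(I\cap J)\rtimes G$; you instead invoke the identity $K\cap L=\overline{KL}$ for closed two-sided ideals (legitimate here, since both crossed products sit as ideals of $A\rtimes G$ by the cited theorem) and verify the same kind of elementary product. Since $K\cap L=\overline{KL}$ is itself proved with an approximate unit, the two arguments have essentially the same content, but yours is more symmetric in $I$ and $J$ and isolates the general $\mathrm{C}^*$-fact cleanly. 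Your closing caution is also well taken: a Fourier-coefficient argument is delicate in the full partial crossed product, and both proofs correctly avoid it.
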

\begin{proof}
Take $x\in(I\rtimes G)\cap (J\rtimes G)$. Let $(u_\lambda)_{\lambda\in\Lambda}$ be an approximate unit for $J$. Then $x=\lim_\lambda x(u_\lambda\delta_e)$. Since $x\in I\rtimes G$, it follows that $x$ can be approximated by finite sums $\sum a_g\delta_g$ such that each $a_g\in I$. Due to $G$-invariance of $I$ and $J$, if $a_g\in I$, then $a_g\delta_gu_\lambda\delta_e\in (I\cap J)\rtimes G$. Hence, for every $\lambda\in \Lambda$, $x(u_\lambda\delta_e)\in (I\cap J)\rtimes G$. From this, we conclude that $x\in(I\cap J)\rtimes G$.

The opposite inclusion is evident. 
\end{proof}
\begin{prop}\label{domains}
Let A be a C*-algebra, $G\rtimes_\alpha H$ a semi-direct product of groups and 
$$(A,G\rtimes_\alpha H,\{I_{(g,h)}\}_{(g,h)\in G\rtimes_\alpha H},\{\theta_{(g,h)}\}_{(g,h)\in G\rtimes_\alpha H})$$ a partial dynamical system such that for every $g\in G$ and $h\in H$,
\begin{equation}\label{cond}
I_{(g,h)}\subset I_{(g,e)}\cap I_{(e,h)}.
\end{equation}

Consider the partial dynamical system $$(A,G,\{I_{(g,e)}\}_{g\in G},\{\theta_{(g,e)}\}_{g\in G})$$ obtained by restricting the partial action $\theta$ to the subgroup $G\times\{e\}$. There is a partial action of $H$ on $A\rtimes G$ and a $*$-isomorphism
\begin{align*}
\varphi\colon A\rtimes({G\rtimes_\alpha H})&\to(A\rtimes G)\rtimes H\\
a_{(g,h)}\delta_{(g,h)}&\mapsto (a_{(g,h)}\delta_g)\delta_h.
\end{align*}
\end{prop}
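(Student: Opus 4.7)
The strategy is to construct the partial action of $H$ on $A\rtimes G$ directly, define $\varphi$ on the dense $*$-subalgebra generated by the elements $a_{(g,h)}\delta_{(g,h)}$, verify that it is a $*$-homomorphism, and obtain the inverse by a symmetric construction. For each $h\in H$, the candidate ideal is
\[
J_h:=\overline{\mathrm{span}}\{a\delta_g : g\in G,\ a\in I_{(g,h)}\}\subseteq A\rtimes G,
\]
and the candidate partial isomorphism $\beta_h\colon J_{h^{-1}}\to J_h$ is given on generators by
\[
\beta_h(a\delta_g):=\theta_{(e,h)}(a)\,\delta_{\alpha_h(g)},\qquad a\in I_{(g,h^{-1})},
\]
extended linearly and continuously. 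That $\theta_{(e,h)}(a)$ belongs to $I_{(\alpha_h(g),h)}$ (so that the image really sits in $J_h$) follows from applying \eqref{doms} to the pair $(e,h^{-1}),(g,e)$, combined with the inclusion $I_{(g,h^{-1})}\subseteq I_{(e,h^{-1})}\cap I_{(g,e)}$ supplied by \eqref{cond}.

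I would first check that $J_h$ is a two-sided ideal. The only nontrivial point is that a product $(a\delta_g)(b\delta_{g'})$, with $a\in I_{(g,h)}$ and $b\in I_{(g',e)}$, has the form $c\delta_{gg'}$ with $c\in I_{(gg',h)}$; using the definition of the product in $A\rtimes G$, one rewrites
\[
\theta_{((gg')^{-1},e)}(c)=\theta_{(g'^{-1},e)}\bigl(\theta_{(g^{-1},e)}(a)\,b\bigr),
\]
and then \eqref{doms} (applied twice) together with $I_{(g'^{-1},h)}\subseteq I_{(e,h)}$ coming from \eqref{cond} gives membership in $I_{(e,h)}$, which by \eqref{doms} again is equivalent to $c\in I_{(gg',h)}$. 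Left ideal stability is analogous. Next, I would verify that $\{\beta_h\}$ satisfies Definition \ref{partiald}: the inverse relation $\beta_{h^{-1}}\beta_h=\mathrm{id}$ is immediate from $\theta_{(e,h^{-1})}\theta_{(e,h)}=\mathrm{id}$ and $\alpha_{h^{-1}}\alpha_h=\mathrm{id}$ on the appropriate ideals, while the composition axiom unwinds into an instance of axiom (ii) for $\theta$ applied to $(e,h_2),(e,h_1),(g,e)$, using the semidirect product identity $(e,h_2h_1)(g,e)=(\alpha_{h_2h_1}(g),h_2h_1)$.

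Having $\beta$ in place, I would define $\varphi$ on the algebraic level by $a_{(g,h)}\delta_{(g,h)}\mapsto (a_{(g,h)}\delta_g)\delta_h$ and check it is a $*$-homomorphism by comparing the product formulas on the two sides. Using $(g_1,h_1)(g_2,h_2)=(g_1\alpha_{h_1}(g_2),h_1h_2)$ in $A\rtimes(G\rtimes_\alpha H)$ on one hand, and the formula for $\beta_{h_1}$ together with the multiplication of $A\rtimes G$ on the other, both sides reduce to $\theta_{(g_1,e)}(\theta_{(g_1,h_1)^{-1}}(a_{(g_1,h_1)})\,b)\delta_{\alpha_{h_1}(g_2)+\text{etc.}}$ after applying \eqref{doms}; the involutions match by a similar check. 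Contractivity with respect to the maximal C$^{\ast}$-seminorms then follows from the universal property, since any C$^{\ast}$-seminorm on one algebraic crossed product pulls back to a C$^{\ast}$-seminorm on the other. Finally, I would define $\psi\colon(b\delta_g)\delta_h\mapsto b\delta_{(g,h)}$ on the dense $*$-subalgebra of $(A\rtimes G)\rtimes_\beta H$ and verify by a parallel calculation that it is a $*$-homomorphism inverse to $\varphi$, so that both extend to mutually inverse $*$-isomorphisms of the C$^*$-completions.

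The main obstacle is the careful verification that $\beta$ is a genuine partial action—specifically that each $J_h$ is an ideal and that the composition axiom holds—because both rely on juggling \eqref{doms}, the semidirect product multiplication, and the containment \eqref{cond}; once this bookkeeping is done, the construction and inversion of $\varphi$ are essentially formal consequences of the multiplication rules and the universal property of partial crossed products.
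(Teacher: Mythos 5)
Your construction is essentially the paper's: the same partial action $\beta_h(a\delta_g)=\theta_{(e,h)}(a)\delta_{\alpha_h(g)}$ on the same domains (your $J_h$ coincides with the paper's $I_{(e,h)}\rtimes G$ once one notes, as the paper does in a follow-up proposition, that \eqref{cond} forces $I_{(g,h)}=I_{(g,e)}\cap I_{(e,h)}$), the same $\varphi$, and the same inverse on generators. So the overall route is correct.

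Two steps are glossed over where the paper does real work. First, the composition axiom for $\beta$ is a \emph{conditional} statement about arbitrary elements: given $x\in J_{h_1^{-1}}$ with $\beta_{h_1}(x)\in J_{h_2^{-1}}$, you must produce approximants of $x$ by generators $a\delta_g$ with $a$ lying in the correct \emph{intersection} of ideals before axiom (ii) for $\theta$ can be invoked pointwise; this is why the paper proves the standalone lemma $(I\rtimes G)\cap(J\rtimes G)=(I\cap J)\rtimes G$ for $G$-invariant ideals $I,J$. Checking the axiom only on generators, as your sketch suggests, does not address the hypothesis ``$\beta_{h_1}(x)\in J_{h_2^{-1}}$'' for general $x$. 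Second, the inverse $\psi$ must be defined on the full algebraic crossed product $\{f\in C_c(H,A\rtimes G): f(h)\in I_{(e,h)}\rtimes G\}$, whose fibers are completed ideals and are \emph{not} algebraically spanned by elements $b\delta_g$; so ``define $\psi$ on $(b\delta_g)\delta_h$ and extend by continuity'' presupposes a boundedness that the universal property does not directly supply. The paper handles this by setting $\psi(f_h\delta_h):=\lim_\lambda \psi_h(f_h)(u_\lambda^h\delta_{(e,h)})$ for an approximate unit $(u_\lambda^h)$ of $I_{(e,h)}$ and observing that this is contractive on each fiber. Both points are fixable, but they are precisely the non-formal content of the proof.
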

\begin{proof}
Notice that condition \eqref{cond} and \cite[Proposition 2.6]{exel} imply that for each $h\in H$, $I_{(e,h)}$ is a $G$-invariant ideal for the partial dynamical system
$$(A,G,\{I_{(g,e)}\}_{g\in G},\{\theta_{(g,e)}\}_{g\in G}).$$

Also, for each $g\in G$ and $h\in H$, it is true that $\theta_{(e,h)}(I_{(e,h^{-1})}\cap I_{(g,e)})\subset I_{(\alpha_h(g),e)}$. For every $h\in H$, define
\begin{align*}
\beta_h\colon I_{(e,h^{-1})}&\rtimes G\to I_{(e,h)}\rtimes G\\
a_g\delta_g&\mapsto\theta_{(e,h)}(a_g)\delta_{\alpha_h(g)}.
\end{align*}

Let us check that $\beta_h$ is a well-defined $*$-homomorphism. Given $g,l\in G$, $a_g\in I_{(e,h^{-1})}\cap I_{(g,e)}$ and $a_l\in I_{(e,h^{-1})}\cap I_{(l,e)}$, we have that 
\begin{align*}
\beta_h((a_g\delta_g)(a_l\delta_l))&=\beta_h(\theta_{(g,e)}(\theta_{(g^{-1},e)}(a_g)a_l)\delta_{gl})\\
&=\theta_{(e,h)}(\theta_{(g,e)}(\theta_{(g^{-1},e)}(a_g)a_l))\delta_{\alpha_h(gl)}\\
&=\theta_{(\alpha_h(g),e)}(\theta_{(e,h)}(\theta_{(g^{-1},e)}(a_g)a_l))\delta_{\alpha_h(gl)}\\
&=\theta_{(\alpha_h(g),e)}(\theta_{(\alpha_h(g^{-1}),e)}(\theta_{(e,h)}(a_g))\theta_{(e,h)}(a_l))\delta_{\alpha_h(gl)}\\
&=(\theta_{(e,h)}(a_g)\delta_{\alpha_h(g)})(\theta_{(e,h)}(a_l)\delta_{\alpha_h(l)})\\
&=\beta_h(a_g\delta_g)\beta_h(a_l\delta_l).
\end{align*}

We leave it to the reader to check that, for every $g\in G$ and $a_g\in I_{(e,h^{-1})}\cap I_{(g,e)}$, it holds that $\beta_h((a_g\delta_g)^*)=\beta_h(a_g\delta_g)^*$. 
Therefore, $\beta_h$ is a well-defined $*$-homomorphism. Also $\beta_{h^{-1}}=(\beta_h)^{-1}$ for every $h\in H$.

 We claim that
$$(A\rtimes G,H,\{I_{(e,h)}\rtimes G\}_{h\in H},\{\beta_h\}_{h\in H})$$ is a partial dynamical system. 

Obviously, condition (i) of Definition \ref{partiald} holds, so we are left with checking that (ii) also holds.

Given $h_1,h_2\in H$ and $x\in I_{(e,h_1^{-1})}\rtimes G$, suppose that $\beta_{h_1}(x)\in I_{(e,h_2^{-1})}\rtimes G$ and let us show that $x\in I_{(e,(h_2h_1)^{-1})}\rtimes G$. 

Since the image of $\beta_{h_1}$ is $I_{(e,h_1)}\rtimes G$, we get that $$\beta_{h_1}(x)\in (I_{(e,h_2^{-1})}\rtimes G)\cap( I_{(e,h_1)}\rtimes G)=(I_{(e,h_2^{-1})}\cap I_{(e,h_1)})\rtimes G.$$

Since $\theta_{(e,h_1^{-1})}(I_{(e,h_2^{-1})}\cap I_{(e,h_1)})\subset I_{(e,(h_2h_1)^{-1})}$, we conclude, by using the definition of $\beta_{h_1^{-1}}$, that
$$
\beta_{h_1^{-1}}((I_{(e,h_2^{-1})}\cap I_{(e,h_1)})\rtimes G) \subset I_{(e,(h_2h_1)^{-1})}\rtimes G.
$$
Hence, $x\in I_{(e,(h_2h_1)^{-1})}\rtimes G$.

Now let us show that 
\begin{equation}
\beta_{h_2h_1}(x)=\beta_{h_2}\circ\beta_{h_1}(x). \label{beta}
\end{equation}
 Since $x\in (I_{(e,(h_2h_1)^{-1})}\cap I_{(e,h_1^{-1})})\rtimes G$, by using approximation arguments we can assume that $x=a\delta_g$ for some $g\in G$ and $a\in I_{(e,(h_2h_1)^{-1})}\cap I_{(e,h_1^{-1})}$. Since $\theta_{(e,h_2h_1)}(a)=\theta_{(e,h_2)}\circ \theta_{(e,h_1)}(a)$, we conclude that \eqref{beta} holds.

Hence, we have proven that
$$(A\rtimes G,H,\{I_{(e,h)}\rtimes G\}_{h\in H},\{\beta_h\}_{h\in H})$$ is a partial dynamical system. 

Define
\begin{align*}
\varphi\colon A\rtimes({G\rtimes_\alpha H})&\to(A\rtimes G)\rtimes H\\
a_{(g,h)}\delta_{(g,h)}&\mapsto (a_{(g,h)}\delta_g)\delta_h.
\end{align*}
For every $h\in H$, define

\begin{align*}
\psi_h\colon I_{(e,h)}\rtimes G&\to A\rtimes(G\rtimes H)\\
a_g\delta_g&\mapsto a_g\delta_{(g,e)}.
\end{align*}

It is a straightforward computation to check that $\varphi$ and, for every $h\in H$, $\psi_h$ are well-defined $*$-homomorphisms.
 
For every $h\in H$, let $(u_\lambda^h)_{\lambda\in\Lambda_h}$ be an approximate unit for $I_{(e,h)}$. Given $g\in G$, $h\in H$ and $a\in I_{(e,h)}\cap I_{(g,e)}$, notice that
\begin{align}\label{conv}
\lim_\lambda(a\delta_{(g,e)})(u_\lambda^h\delta_{(e,h)})=\lim_\lambda\theta_{(g,e)}(\theta_{(g^{-1},e)}(a)u_\lambda^h)\delta_{(g,h)}=a\delta_{(g,h)},
\end{align}
since $\theta_{(g^{-1},e)}(a)\in  I_{(g^{-1},h)}\subset  I_{(e,h)}$.

Define
\begin{align*}
\psi\colon (A\rtimes G)\rtimes H&\to A\rtimes(G\rtimes H)\\
f_h\delta_h&\mapsto\lim_\lambda(\psi_h(f_h)(u_\lambda^h\delta_{(e,h)})).
\end{align*}

In order to see that the above limit is well-defined, notice that, due to \eqref{conv}, convergence is assured for $f_h=a_g\delta_g$. Then, one observes that $(\psi_h(f)(u_\lambda^h\delta_{(e,h)}))$ is a Cauchy net for any $f\in I_{(e,h)}\rtimes G$.

For the proof that $\psi$ is a well-defined $*$-homomorphism, it is worthwhile remarking that, for every $h\in H$, $\psi\big|_{(I_{(e,h)}\rtimes G)\delta_h}$ is a contractive linear map.

Notice that, given $g\in G$, $h\in H$ and $a\in I_{(e,h)}\cap I_{(g,e)}$, it holds that $\psi((a\delta_g))\delta_h)=a\delta_{(g,h)}$. Hence, $\psi$ and $\varphi$ are inverses of each other.

\end{proof}

The purpose of the next proposition is to show that condition \eqref{cond} is stronger than it seems at first sight. We give a direct proof of it, even though it can also be obtained as a corollary of the proof of the preceding proposition.
\begin{prop}
Let A be a $\mathrm{C}^*$-algebra, $G\rtimes_\alpha H$ a semi-direct product of groups and 
$$(A,G\rtimes_\alpha H,\{I_{(g,h)}\}_{(g,h)\in G\rtimes_\alpha H},\{\theta_{(g,h)}\}_{(g,h)\in G\rtimes_\alpha H})$$ a partial dynamical system such that, for every $g\in G$ and $h\in H$, \eqref{cond} is satisfied.
Then, for every $g\in G$ and $h\in H$,
\begin{equation*}
I_{(g,h)}= I_{(g,e)}\cap I_{(e,h)}.
\end{equation*}
\end{prop}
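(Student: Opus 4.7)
The forward inclusion $I_{(g,h)} \subset I_{(g,e)} \cap I_{(e,h)}$ is exactly the hypothesis \eqref{cond}, so the plan is to establish the reverse inclusion $I_{(g,e)} \cap I_{(e,h)} \subset I_{(g,h)}$. The key tool will be the $\mathrm{C}^*$-algebraic analogue of \eqref{doms} (\cite[Proposition 2.6]{exel}), which says that for every partial dynamical system and every pair of group elements $u,v$,
$$\theta_u(I_{u^{-1}} \cap I_v) = I_u \cap I_{uv}.$$
Applying this twice in the semi-direct product, once to move an element into $I_{(g^{-1},e)}$ and once to move it back to $I_{(g,h)}$, will give the desired inclusion.

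First I would apply the identity to $u = (g,e)$ and $v = (e,h)$, using the semi-direct product relation $(g,e)(e,h) = (g,h)$, to obtain
$$I_{(g,e)} \cap I_{(g,h)} = \theta_{(g,e)}\bigl(I_{(g^{-1},e)} \cap I_{(e,h)}\bigr).$$
Invoking \eqref{cond} to simplify the left-hand side to $I_{(g,h)}$, this gives the concrete description
$$I_{(g,h)} = \theta_{(g,e)}\bigl(I_{(g^{-1},e)} \cap I_{(e,h)}\bigr).$$

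Second, I would apply the identity to $u = (g^{-1}, e)$ and $v = (e,h)$, for which $uv = (g^{-1}, h)$, giving
$$\theta_{(g^{-1},e)}\bigl(I_{(g,e)} \cap I_{(e,h)}\bigr) = I_{(g^{-1},e)} \cap I_{(g^{-1},h)}.$$
By hypothesis \eqref{cond} applied to the pair $(g^{-1}, h)$, we have $I_{(g^{-1},h)} \subset I_{(e,h)}$, so
$$\theta_{(g^{-1},e)}\bigl(I_{(g,e)} \cap I_{(e,h)}\bigr) \subset I_{(g^{-1},e)} \cap I_{(e,h)}.$$

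Finally, given any $a \in I_{(g,e)} \cap I_{(e,h)}$, set $b := \theta_{(g^{-1},e)}(a)$. The previous step gives $b \in I_{(g^{-1},e)} \cap I_{(e,h)}$, and then the first step yields $a = \theta_{(g,e)}(b) \in I_{(g,h)}$. I do not expect a serious obstacle; the only bookkeeping subtlety is the asymmetry between the roles of $(g,e)$ and $(e,h)$ in the semi-direct product, which dictates that one must first transport into $I_{(g^{-1},e)}$ via $\theta_{(g^{-1},e)}$ and then use \eqref{cond} for the conjugated element $(g^{-1},h)$ to verify membership in $I_{(e,h)}$ before transporting back.
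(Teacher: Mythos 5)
Your proof is correct and is essentially the paper's own argument: the paper proves the reverse inclusion by the same two applications of $\theta_u(I_{u^{-1}}\cap I_v)=I_u\cap I_{uv}$ (first with $u=(g^{-1},e)$, then with $u=(g,e)$), combined with \eqref{cond} for $(g^{-1},h)$ and $(g,h)$, written as a single chain of set inclusions rather than element-wise.
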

\begin{proof}
Given $g\in G$ and $h\in H$,
\begin{align*}
 I_{(g,e)}\cap I_{(e,h)}&=\theta_{(g,e)}\circ\theta_{(g^{-1},e)}(I_{(g,e)}\cap I_{(e,h)})\\
&=\theta_{(g,e)}(I_{(g^{-1},e)}\cap I_{(g^{-1},h)})\\
&\subset\theta_{(g,e)}(I_{(g^{-1},e)}\cap I_{(e,h)})\\
&= I_{(g,e)}\cap I_{(g,h)}\\
&= I_{(g,h)}.
\end{align*}
\end{proof}

Next, we give an example where Proposition \ref{domains} can be applied.
\begin{example}
Let $G$ and $H$ be groups and $j:G\times H\to \beta(G\times H)$ be the embedding of $G\times H$ into its Stone-{\v{C}}ech compactification. Given $A\subset G$ and $B\subset H$ non-empty subsets, consider the partial action $(\{D_g\}_{g\in G},\{\theta_g\}_{g\in G})$ of $G\times H$ on $\overline{j(A\times B)}$ obtained by restricting to $\overline{j(A\times B)}$ the canonical action of  $G\times H$ on $\beta(G\times H)$, as in Example \ref{restric}. By definition of the restriction, given $(g,h)\in G\times H$, we have that
\begin{align*}
D_{(g,h)}=\overline{j((A\cap gA)\times (B\cap hB))}&=\overline{j(A\times (B\cap hB))}\cap
\overline{j((A\cap gA)\times B)}\\
&=D_{(g,e)}\cap D_{(e,h)}.
\end{align*}

From this, it follows easily that the induced partial action on $C(\overline{j(A\times B)})$ satisfies \eqref{cond}. 
\end{example}
  
Notice that, if $G$ and $H$ are supramenable groups, we can combine the previous example with Proposition \ref{domains} and Theorem \ref{imp} to show that $A\times B\subset G\times H$ is non-paradoxical for every $A\subset G$ and $B\subset H$ non-empty subsets. We remark, however, that this fact also follows easily from results of \cite{rosenblatt1973generalization}.

As mentioned in the introduction, the class of supramenable groups is not closed under taking semi-direct products. For example, let $(x_n)_{n\in\mathbb{Z}}\in\bigoplus_\mathbb{Z}\frac{\mathbb{Z}}{2\mathbb{Z}}$ be such that $x_0=1$ and $x_n=0$ for $n\neq 0$. Also let $e\in\bigoplus_\mathbb{Z}\frac{\mathbb{Z}}{2\mathbb{Z}}$ be the neutral element. Now consider the lamplighter group $\frac{2\mathbb{Z}}{\mathbb{Z}}\wr\mathbb{Z}=(\bigoplus_\mathbb{Z}\frac{\mathbb{Z}}{2\mathbb{Z}})\rtimes\mathbb{Z}$. It is well-known and easy to check that $((x_n)_{n\in\mathbb{Z}},1),(e,1)\in G$ generate a free monoid $SF_2$. Since $SF_2\subset \frac{2\mathbb{Z}}{\mathbb{Z}}\wr\mathbb{Z}$ is obviously paradoxical, it follows that $\frac{2\mathbb{Z}}{\mathbb{Z}}\wr\mathbb{Z}$ is not supramenable.

Let $G\rtimes H$ be a non-supramenable group which is the semi-direct product of supramenable groups. By Theorem \ref{imp}, there is a partial action of $G\rtimes H$ on a unital $\mathrm{C}^*$-algebra with a tracial state such that the associated partial crossed product does not admit a tracial state. By applying Theorem \ref{imp} twice, we conclude that a decomposition such as in Proposition \ref{domains} cannot hold in this case.

Next, we are going to provide another condition which allows a decomposition as in \eqref{isosd}. For the proof of it, we will use the concepts of Fell bundle, graded $\mathrm{C}^*$-algebra, cross sectional $\mathrm{C}^*$-algebra of a Fell bundle and reduced partial crossed product. We refer the reader to \cite{exel} for the appropriate definitions.

Given a partial action $\theta$ of a group $G$ on a $\mathrm{C}^*$-algebra $A$ and $H$ a subgroup of $G$, one can consider the partial action of $H$ on $A$ obtained by restricting $\theta$ to $H$. We shall denote this restricted partial action also by $\theta$.
\begin{lem}\label{condsd}
Let $(A, G, \{I_g\}_{g\in G},\{\theta_g\}_{g\in G})$ be a partial dynamical system and $H$ a subgroup of $G$. Then $A\rtimes_{\theta} H$ embeds naturally into $A\rtimes_\theta G$ and there exists a conditional expectation $\psi_H\colon A\rtimes_\theta G\to A\rtimes_{\theta} H$ such that, for every $g\in G$ and $a\in I_g$,
 \begin{align}\label{h}
\psi_H(a\delta_g)=\begin{cases}a\delta_g &\text{if } g\in H \\
0 & \text{if } g\notin H. \end{cases}
\end{align}
\end{lem}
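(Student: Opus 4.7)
The plan is to work in the Fell bundle framework referenced just before the statement. The partial crossed product $A \rtimes_\theta G$ is the full cross sectional $\mathrm{C}^*$-algebra $C^*(\mathcal{B})$ of the semi-saturated Fell bundle $\mathcal{B} = \{I_g \delta_g\}_{g \in G}$ over $G$, and restricting $\mathcal{B}$ to $H$ yields the Fell bundle associated with the restricted partial action $\theta|_H$, so $A \rtimes_\theta H \cong C^*(\mathcal{B}|_H)$. The lemma thus becomes the assertion that a sub-Fell-bundle over a subgroup embeds in the ambient cross sectional algebra and admits a canonical conditional expectation from it.

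At the algebraic level, the inclusion $A \rtimes_{\theta,\mathrm{alg}} H \hookrightarrow A \rtimes_{\theta,\mathrm{alg}} G$ is a $*$-homomorphism, and the map $\sum_{g} a_g\delta_g \mapsto \sum_{g\in H} a_g \delta_g$ is a $*$-linear idempotent onto it satisfying \eqref{h} by construction. The lemma therefore reduces to continuity of these two maps in the ambient max seminorms.

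For the embedding, I would induce covariant representations from $H$ to $G$: given a covariant representation of $(A, H, \theta|_H)$ on a Hilbert space $\mathcal{H}$, construct an induced covariant representation of $(A, G, \theta)$ on a Hilbert module built from the coset space $H \backslash G$, after choosing a coset decomposition $G = \bigsqcup Hs$. The resulting representation of $A \rtimes_\theta G$ restricts back to the original one of $A \rtimes_{\theta,\mathrm{alg}} H$, so every $\mathrm{C}^*$-seminorm on $A \rtimes_{\theta,\mathrm{alg}} H$ is dominated by the max seminorm inherited from $A \rtimes_{\theta,\mathrm{alg}} G$, whence the inclusion is isometric. For the conditional expectation, fix a faithful representation $\pi$ of $A \rtimes_\theta H$ and let $\widetilde{\pi}$ be its induction to $A \rtimes_\theta G$; compressing by the projection $P$ onto the summand of the induction module corresponding to the trivial coset produces a completely positive contraction that agrees with $\pi \circ \psi_H$ on finite sections, and so extends $\psi_H$ continuously.

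The main obstacle is the induction construction in the partial setting. Unlike for global actions, one must track the ideals $I_g$ carefully and verify the partial covariance relations for the induced partial isometries; this is where the semi-saturation of $\mathcal{B}$ and the multiplication formula $(a_g\delta_g)(b_h\delta_h) = \theta_g(\theta_{g^{-1}}(a_g) b_h)\delta_{gh}$ enter. Once induction is set up, both halves of the lemma follow from direct computations on the dense $*$-subalgebra, and one could equivalently invoke the general Fell bundle machinery of \cite{exel}.
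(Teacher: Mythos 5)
Your route is genuinely different from the paper's. The paper constructs nothing by hand: it quotes \cite[Corollary 6.3]{ara2013dynamical} for the embedding $A\rtimes_\theta H\hookrightarrow A\rtimes_\theta G$, quotes \cite[Proposition 6.1]{ara2013dynamical} for the analogous embedding and conditional expectation $\varphi_H$ at the level of \emph{reduced} partial crossed products, and then transports $\varphi_H$ to the full crossed products by conjugating $\varphi_H\otimes E_H$ (with $E_H\colon \mathrm{C}^*_\mathrm{r}(G)\to\mathrm{C}^*_\mathrm{r}(H)$ the usual expectation) by the injections $A\rtimes_\theta G\hookrightarrow(A\rtimes_{\theta,\mathrm{r}}G)\otimes_{\mathrm{max}}\mathrm{C}^*_\mathrm{r}(G)$ of \cite[Proposition 18.9]{exel}. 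That tensor trick is precisely what lets one obtain a conditional expectation on the \emph{full} crossed product without ever inducing representations. Your plan --- induce covariant representations from $H$ to $G$ to get isometry of the inclusion (the other inequality being automatic, since any $\mathrm{C}^*$-seminorm on $A\rtimes_{\theta,\mathrm{alg}}G$ restricts to one on $A\rtimes_{\theta,\mathrm{alg}}H$), then compress a faithfully induced representation by the projection onto the trivial-coset summand to get $\psi_H$ --- is the classical direct argument; it is more self-contained but forces you to build the induction machinery that the paper outsources.

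The gap is exactly where you flag it, and it is the whole content of the lemma. To induce, you must make the finitely supported sections of the Fell bundle of $\theta$ into a right pre-Hilbert module over $A\rtimes_\theta H$ with inner product $\langle\xi,\eta\rangle=\psi_H^{\mathrm{alg}}(\xi^*\eta)$, and positivity of this inner product is the crux: decomposing $G$ into left cosets $s_iH$ and writing $\xi=\sum_i\xi_i$ with $\xi_i$ supported on $s_iH$, the cross terms of $\psi_H^{\mathrm{alg}}(\xi^*\xi)$ vanish and each $\xi_i^*\xi_i$ lands in $A\rtimes_{\theta,\mathrm{alg}}H$, but one must still prove that $\xi_i^*\xi_i$ is positive in the \emph{full} $\mathrm{C}^*$-norm of $A\rtimes_\theta H$ even though $\xi_i$ itself does not live in that algebra; neither the embedding nor the expectation is available until this is done. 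A minor further point: the Fell bundle of a partial action of a general group is not ``semi-saturated'' in any canonical sense (semi-saturation is defined relative to a choice of generators), and nothing in the argument should rely on it. Since all of this Hilbert-module work is already carried out in \cite{ara2013dynamical} and \cite{exel}, citing it, as the paper does, is the economical choice; your outline is viable but should either execute the positivity argument or replace it with those references.
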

\begin{proof}
The fact that $A\rtimes_\theta H$ embeds naturally into $A\rtimes_\theta G$ was proven in \cite[Corollary 6.3]{ara2013dynamical}.

Let us now prove the existence of $\psi_H$. By \cite[Proposition 6.1]{ara2013dynamical}, we have that the reduced partial crossed product $A\rtimes_{\theta,\mathrm{r}} H$ embeds naturally into $A\rtimes_{\theta,\mathrm{r}}G$ and there exists a conditional expectation $\varphi_H\colon A\rtimes_{\theta,\mathrm{r}}G\to A\rtimes_{\theta,\mathrm{r}} H$ satisfying the same condition as in \eqref{h}.

Let $\mathrm{C}^*_\mathrm{r}(G)$ and $\mathrm{C}^*_\mathrm{r}(H)$ denote the reduced group $\mathrm{C}^*$-algebras of $G$ and $H$ and $E_H\colon \mathrm{C}^*_\mathrm{r}(G)\to \mathrm{C}^*_\mathrm{r}(H)$ the conditional expectation satisfying the same condition as in \eqref{h}.

Let 
\begin{align*} 
i_G\colon A\rtimes_\theta G&\to (A\rtimes_{\theta,\mathrm{r}}G)\otimes_\mathrm{max}\mathrm{C}^*_\mathrm{r}(G)
\end{align*}

 and 
\begin{align*}i_H\colon A\rtimes_\theta H&\to (A\rtimes_{\theta,\mathrm{r}}H)\otimes_\mathrm{max}\mathrm{C}^*_\mathrm{r}(H) 
\end{align*}
be the injective $*$-homomorphisms given by \cite[Proposition 18.9]{exel} such that $i_G(a_g\delta_g)= a_g\delta_g\otimes\delta_g$ and $i_H(a_h\delta_h)=a_h\delta_h\otimes\delta_h$ for every $g\in G$, $a_g\in I_g$, $h\in H$ and $a_h\in I_h$.

Then $\psi_H:=(i_H)^{-1}\circ(\varphi_H\otimes E_H)\circ i_G$ is the desired conditional expectation.

\end{proof}

\begin{theorem}\label{stable}
Let $A$ be a separable and stable $\mathrm{C}^*$-algebra, $G$ a countable group, $N$ a normal subgroup of $G$ and $(A, G, \{I_g\}_{g\in G},\{\theta_g\}_{g\in G})$ a partial dynamical system. Then there is a partial action $\beta$ of $\frac{G}{N}$ on $A\rtimes_{\tilde{\theta}}N$ such that 
$$A\rtimes_\theta G\cong(A\rtimes_{\theta}N)\rtimes_\beta\frac{G}{N}.$$
\end{theorem}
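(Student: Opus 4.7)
The plan is to view both sides as cross sectional $\mathrm{C}^*$-algebras of the same Fell bundle over $G/N$. First, $A\rtimes_\theta G$ is naturally the cross sectional $\mathrm{C}^*$-algebra of the Fell bundle $(B_g)_{g\in G}$ with $B_g=\overline{I_g\delta_g}$. Normality of $N$ guarantees that the subspaces
$$\mathcal{C}_c := \overline{\operatorname{span}}\bigcup_{g\in c} B_g\subset A\rtimes_\theta G,\qquad c\in G/N,$$
satisfy $\mathcal{C}_c\mathcal{C}_{c'}\subset\mathcal{C}_{cc'}$ and $\mathcal{C}_c^*=\mathcal{C}_{c^{-1}}$, so they form a Fell bundle $\mathcal{C}$ over $G/N$; its unit fibre is precisely $A\rtimes_\theta N$, identified as a $\mathrm{C}^*$-subalgebra of $A\rtimes_\theta G$ via Lemma \ref{condsd}. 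The conditional expectation $\psi_N$ of Lemma \ref{condsd} is faithful on positives, so this $G/N$-grading is \emph{topologically graded} in the sense of \cite[Chapter 19]{exel}, whence $A\rtimes_\theta G$ is canonically isomorphic to the full cross sectional $\mathrm{C}^*$-algebra of $\mathcal{C}$.

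Next, I would show that $\mathcal{C}$ is the Fell bundle associated to a partial action of $G/N$ on its unit fibre. Each fibre $\mathcal{C}_c$ is a Hilbert $(A\rtimes_\theta N)$-bimodule implementing a Morita equivalence between the closed two-sided ideals $J_{c^{-1}} := \overline{\mathcal{C}_c^*\mathcal{C}_c}$ and $J_c := \overline{\mathcal{C}_c\mathcal{C}_c^*}$. Because $A$ is separable and stable and $N$ is countable, $A\rtimes_\theta N$ is separable and stable as well (for instance, by realising the partial crossed product as a full hereditary subalgebra of the crossed product by the Abadie enveloping action and invoking standard permanence properties of stability). A Brown--Green--Rieffel type stabilisation result then produces, for each $c$, a $*$-isomorphism $\beta_c\colon J_{c^{-1}}\to J_c$ whose graph bimodule is isomorphic, as a Hilbert bimodule, to $\mathcal{C}_c$. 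The Fell bundle multiplication and involution translate directly into axioms (i) and (ii) of Definition \ref{partiald} for the family $(\{J_c\},\{\beta_c\})$, giving a partial action $\beta$ of $G/N$ on $A\rtimes_\theta N$ whose associated Fell bundle is canonically isomorphic to $\mathcal{C}$.

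Combining these two steps, $(A\rtimes_\theta N)\rtimes_\beta G/N$ is also a model for the cross sectional $\mathrm{C}^*$-algebra of $\mathcal{C}$, yielding the desired $*$-isomorphism $A\rtimes_\theta G\cong(A\rtimes_\theta N)\rtimes_\beta G/N$.

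The main obstacle is the passage from the abstract Hilbert bimodule structure on each $\mathcal{C}_c$ to a genuine $*$-isomorphism $\beta_c$ between ideals of $A\rtimes_\theta N$. This is precisely where separability and stability of $A$ are essential, since for a general $\sigma$-unital algebra a Morita equivalence between ideals need not be implemented by an isomorphism of $\mathrm{C}^*$-algebras. A secondary but delicate technical point is verifying that stability indeed descends from $A$ to $A\rtimes_\theta N$ in the partial-action setting, without which the Brown--Green--Rieffel step is not available.
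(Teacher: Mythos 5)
Your overall architecture is the same as the paper's: regroup the fibres $B_g=\overline{I_g\delta_g}$ of $A\rtimes_\theta G$ over the cosets of $N$ to get a Fell bundle $\mathcal{C}$ over $G/N$ whose unit fibre is $A\rtimes_\theta N$ (via Lemma \ref{condsd}), then use separability, countability and stability to recognize $\mathcal{C}$ as the Fell bundle of a partial action of $G/N$ on $A\rtimes_\theta N$. That last recognition step is exactly \cite[Theorem 27.11]{exel}, whose proof is the Brown--Green--Rieffel regularization you sketch; the descent of stability to the unit fibre, which you rightly flag as delicate, is what the paper's citation of \cite{vb1998stability} supplies.

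There is, however, one genuine gap in your first step. From the existence of the conditional expectation $\psi_N$ you conclude that the $G/N$-grading is topological and then assert that $A\rtimes_\theta G$ is therefore the \emph{full} cross sectional $\mathrm{C}^*$-algebra of $\mathcal{C}$. Topological grading does not give this: it only yields surjections $C^*(\mathcal{C})\to A\rtimes_\theta G\to C^*_r(\mathcal{C})$ that are the identity on fibres, so a priori $A\rtimes_\theta G$ is merely a quotient of $C^*(\mathcal{C})$ (compare: a reduced crossed product by a non-amenable group is topologically graded but is not the full cross sectional algebra of its bundle). As written, your argument would only produce a surjection $(A\rtimes_\theta N)\rtimes_\beta \frac{G}{N}\to A\rtimes_\theta G$. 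The paper closes this gap by observing that $C^*(\mathcal{C})$ itself carries a $G$-grading refining the $G/N$-grading, whose Fell bundle over $G$ is isomorphic to that of $\theta$; since $A\rtimes_\theta G$ is the maximal $\mathrm{C}^*$-algebra of that $G$-bundle, there is also a surjection $A\rtimes_\theta G\to C^*(\mathcal{C})$ fixing each $B_g$, and the two surjections are mutually inverse. You need this maximality argument (or a substitute) to upgrade your quotient map to an isomorphism. A secondary point: constructing the $\beta_c$ one coset at a time is not quite enough --- they must be chosen coherently so that the Fell bundle of the resulting partial action is isomorphic to $\mathcal{C}$ as a bundle, not just fibrewise; this coherence is the substantive content of \cite[Theorem 27.11]{exel}.
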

\begin{proof}

For each $gN\in \frac{G}{N}$, define
$$B_{gN}:=\overline{\text{span}}\{b_h\delta_h:h\in gN,\ \ b_h\in I_h\}.$$ 

We want to show that $(A\rtimes_\theta G,\{B_{gN}\}_{gN\in\frac{G}{N}})$ is a $\frac{G}{N}$-graded $\mathrm{C}^*$-algebra.

Clearly, for every $g,g'\in G$, we have that $B_{gN}B_{g'N}\subset B_{gg'N}$ and $(B_{gN})^*=B_{g^{-1}N}$. Notice that, by Lemma \ref{condsd}, we have that $B_{eN}\cong A\rtimes_{\theta}N$ and there exists a conditional expectation $\psi_N\colon A\rtimes_\theta G\to B_{eN}$ with the same property as in $\eqref{h}$.

By \cite[Theorem 19.1]{exel}, it follows that $$(A\rtimes_\theta G,\{B_{gN}\}_{gN\in\frac{G}{N}})$$ is a $\frac{G}{N}$-graded $\mathrm{C}^*$-algebra. Let $\mathcal{B}$ be the Fell bundle associated to this grading. There is a surjective $*$-homomorphism from the cross sectional $\mathrm{C}^*$-algebra $\mathrm{C}^*(\mathcal{B})$ into $A\rtimes_\theta G$ which is the identity on each $B_{gN}$.

On the other hand, $\mathrm{C}^*(\mathcal{B})$ can also be seen as a $G$-graded $\mathrm{C}^*$-algebra, and the Fell bundle over $G$ associated to this grading is isomorphic to the Fell bundle of $A\rtimes_\theta G$. From these considerations, we conclude that $A\rtimes_\theta G$ is isomorphic to $\mathrm{C}^*(\mathcal{B})$.

Because of \cite[Proposition 4.4]{vb1998stability}, it holds that $B_eN$ is stable.

Due to \cite[Theorem 27.11]{exel}, it follows that there is a partial action $\beta$ of $\frac{G}{N}$ on  $A\rtimes_{\theta}N$ such that
$$(A\rtimes_{\theta}N)\rtimes_\beta\frac{G}{N}\cong \mathrm{C}^*(\mathcal{B})\cong A\rtimes_\theta G.$$

\end{proof}

\bibliographystyle{siam}
\bibliography{bibliografia}
\newcommand{\Addresses}{{
  \bigskip
  Department of Mathematical Sciences, University of Copenhagen,\\
Universitetsparken 5, DK-2100 Copenhagen, Denmark\par\nopagebreak
  \textit{E-mail address:} \texttt{duduscarparo@gmail.com}

}}

\Addresses

\end{document}